\newtheorem{thm}{Theorem}[section]
\newtheorem{pro}[thm]{Proposition}
\newtheorem{cor}[thm]{Corollary}
\newtheorem{lem}[thm]{Lemma}
\newtheorem{rem}[thm]{Remark}
\newcommand{\Hom}{{\rm Hom}}
\newcommand{\Aut}{{\rm Aut}}
\newcommand{\End}{{\rm End}}
\newcommand{\di}{{\rm d}}
\newcommand{\Soc}{{\rm Soc}}
\newcommand{\Jac}{{\rm Jac}}
\newcommand{\M}{{\rm M}}
\newcommand{\C}{{\rm C}}
\newcommand{\K}{{\rm K}}
\newcommand{\cA}{{\mathcal{A}}}
\newcommand{\cB}{{\mathcal{B}}}
\newcommand{\cC}{{\mathcal{C}}}
\newcommand{\cE}{{\mathcal{E}}}
\newcommand{\cD}{{\mathcal{D}}}
\newcommand{\cF}{{\mathcal{F}}}
\newcommand{\cG}{{\mathcal{G}}}
\newcommand{\cL}{{\mathcal{L}}}
\newcommand{\cM}{{\mathcal{M}}}
\newcommand{\cN}{{\mathcal{N}}}
\newcommand{\cS}{{\mathcal{S}}}
\newcommand{\cP}{{\mathcal{P}}}
\newcommand{\cQ}{{\mathcal{Q}}}
\newcommand{\cT}{{\mathcal{T}}}
\newcommand{\cR}{{\mathcal{R}}}
\newcommand{\cU}{{\mathcal{U}}}
\newcommand{\fm}{{\mathfrak{m}}}
\newcommand{\ds}{\displaystyle}
\begin{document}

\title{Intersection Graph of a Module}

\author{Erg\"un Yaraneri}
\maketitle

\small
\begin{center}
Department of Mathematics, Istanbul Technical University,
34469 Maslak, Istanbul, Turkey. \\
{\it e-mail:} eyaraneri@yahoo.com
\end{center}

\begin{abstract}
Let $V$ be a left $R$-module where $R$ is a (not necessarily commutative) ring with unit. The intersection graph $\cG(V)$ of proper $R$-submodules of $V$ is an undirected graph without loops and multiple edges defined as follows: the vertex set is the set of all proper $R$-submodules of $V,$ and there is an edge between two distinct vertices $U$ and $W$ if and only if $U\cap W\neq 0.$ We study these graphs to relate the combinatorial properties of $\cG(V)$ to the algebraic properties of the $R$-module $V.$ We study connectedness, domination, finiteness, coloring, and planarity for $\cG (V).$ For instance, we find the domination number of $\cG (V).$ We also find the chromatic number of $\cG(V)$ in some cases. Furthermore, we study cycles in $\cG(V),$ and complete subgraphs in $\cG (V)$ determining the structure of $V$ for which $\cG(V)$ is planar.

\smallskip
\noindent 2010 {\it Mathematics Subject Classification.} Primary: 16D10; Secondary: 05C25, 05C15, 05C69.

\smallskip
\noindent Keywords: Intersection graph; Submodule; Domination number; Chromatic number; Planarity; Component; Number of submodules
\end{abstract}

\section{Introduction}
Let $\cF $ be a set consisting of proper subobjects of an object with an algebraic structure. Examples of $\cF $ include the set of proper subgroups of a finite group, the set of proper subspaces of a vector space, and the set of proper ideals of a commutative ring. One may define an undirected graph, containing no loops and no multiple edges, constructed from $\cF $ as follows: there is a vertex for each subobject in $\cF ,$ and there is an edge between two vertices whenever the intersection of the subobjects representing the vertices is not the zero object, where the zero object denotes the object having a unique endomorphism. For instance, if $\cF $ is the set of proper subgroups of a finite group then the zero object is the trivial subgroup consisting of only the identity of the group. The graph constructed above is usually called the intersection graph of $\cF ,$ and it is studied in many papers. The intersection graphs of the proper subgroups of a finite group, the proper subspaces of a finite dimensional vector space over a finite field, certain affine subspaces, and the proper ideals of a commutative ring are studied in, for instance, \cite{Chak,Jaf,Jaf2,Jaf3,Lai,Shen,Zel} and some of the references there.

In this paper, we study the intersection graph of the proper submodules of any module over any ring. Therefore, most of the results in the some of the above papers will be easy consequences of some of the results obtained here. The connectivity and the clique number of the intersection graph of the proper submodules of a module are studied in \cite{Ak}. Here we study more aspects of the intersection graph of the proper submodules of a module and obtain more results.

Throughout the paper $R$ is a unital ring which is not necessarily commutative and $V$ is a unitary left $R$-module. By a proper $R$-submodule of $V$ we mean an $R$-submodule of $V$ different from $0$ and $V.$ We denote by $\cG (V)$ the intersection graph of the proper $R$-submodules of $V.$ Therefore, $\cG (V)$ is an undirected graph without loops and multiple edges defined as follows: the vertex set is the set of all proper $R$-submodules of $V,$ and there is an edge between two distinct vertices $U$ and $W$ if and only if $U\cap W\neq 0.$

In Sections 2 and 3, we determine the algebraic structure of the module $V$ for which the graph $\cG (V)$ has any one of the following properties: it is connected; it has a cut vertex; it has a cut edge; it has an $n$-cycle; it is bipartite. Most of the results in Sections 2 and 3 are easy to derive, we include them for completeness.

As the number of $R$-submodules of $V$ is usually not finite, the graph $\cG (V)$ will usually have infinitely many vertices. Most of the notions and results coming from finite graph theory are not true for infinite graphs, see, for instance, \cite{Bas,Ore}. However, we observe that this is not a problem for the study of the graph $\cG (V).$ For example, in Section 4, we justify that $\cG (V)$ has a minimal dominating set for any $V,$ and we determine the domination number of $\cG (V)$ for any $V,$ which may be an infinite cardinal number.

In Section 5, we find the number of the maximal $R$-submodules of $V,$ from which we obtain the structure of $V$ for which $\cG (V)$ has finitely many vertices. We also obtain more results about the number of submodules in Section 6.

We study the chromatic number of $\cG (V)$ in Section 7. For instance, we determine the structure of $V$ for which the chromatic number of $\cG (V)$ is finite. We also calculate the chromatic number of $\cG (V)$ when $V$ is a semisimple module having a (finite) composition series. This result is still general enough to cover the known results (about the chromatic number of the intersection graph of the proper subspaces of a vector) as special cases.

In the last section, we determine the structure of $V$ for which the graph $\cG (V)$ is planar.

To explain our notations and terminologies, let $W$ be an $R$-module. We denote by $\Jac (W)$ and $\Soc (W)$ the Jacobson radical of $W$ and the socle of $W,$ respectively. By a section of $W$ we mean a quotient module of a submodule of $W,$ so sections of $W$ are of the form $Y/X$ where $X\subseteq Y$ are $R$-submodules of $W.$ For any natural number $n,$ the direct sum of $n$ modules all of which are $W$ is denoted by $nW.$ The sets of $R$-module endomorphisms and automorphisms of $W$ are denoted by $\End _R(W)$ and $\Aut _R(W),$ respectively. We write $U\leq W$ to indicate that $U$ is an $R$-submodule of $W.$ Moreover, for $R$-submodules $U'$ and $U$ of $W,$ we say that $U'$ is a complement of $U$ in $W$ if $U\cap U'=0$ and $U+U'=W.$

Let $\cG $ be a graph. If there is a finite path in $\cG $ from a vertex $v$ to a vertex $w,$ then we use the ordered tuple $(u_1,u_2,...,u_n)$ to denote this path, where $u_1=v$ and $u_n=w,$ and each $u_i$ is a vertex on the path, and for any $i$ with $1\leq i\leq n-1$ there is an edge on the path between $u_i$ and $u_{i+1}.$

\section{Connectedness}
For any distinct proper $R$-submodules $X$ and $Y$ of $V,$ we let $\di _V(X,Y)$ denote the distance between $X$ and $Y,$ which is defined to be the length of the shortest path in $\cG (V)$ between $X$ and $Y.$ By the length of a path we mean the cardinality of the set of all the edges on the path. Although the definition of the distance above may be meaningless for a graph with infinitely many vertices and edges, we see in this section that if there is a path in $\cG (V)$ between two vertices then we may choose one with the length less than or equal to $2.$ In a graph it may happen that there is no path between some vertices, in which case, the distance is undefined, and such graphs are called disconnected. Moreover, in an infinite graph the distance between two vertices may be an infinite cardinal number. However, in the intersection graph of a module we observe in this section that the distance between any two distinct vertices is a finite number. We define the diameter of a connected graph $\cG $ to be the largest of the distances between distinct vertices of $\cG ,$ if such a largest distance exists. If $\cG $ has no distinct vertices, we assume that the diameter is $0.$ See, for instance, \cite{Ore} and \cite{West}.

\begin{lem}
\label{2.1} Let $X$ and $Y$ be distinct proper $R$-submodules of $V.$ Then, there is no path in $\cG (V)$ between $X$ and $Y$ if and only if $X$ and $Y$ are simple $R$-submodules of $V,$ and $V=X\oplus Y,$ direct sum of $X$ and $Y.$
\end{lem}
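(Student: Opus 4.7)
My plan is to handle both directions by investigating common neighbours of $X$ and $Y$ in $\cG(V)$: any common neighbour $Z$ provides a length-two path $(X,Z,Y)$, so non-existence of a path reduces to ruling out candidate intermediate submodules. The natural candidates are $X+Y$ and, more subtly, $X'+Y$ for nonzero proper submodules $X'$ of $X$.

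For the reverse direction, I would assume $V = X \oplus Y$ with $X,Y$ simple. Then $X \cap Y = 0$ rules out a direct edge, and I would in fact show $X$ is \emph{isolated} in $\cG(V)$. Given a proper submodule $U$ of $V$ with $U \neq X$ and $X \cap U \neq 0$, simplicity of $X$ forces $X \subseteq U$, and then decomposing along $V = X \oplus Y$ gives $U = X + (U \cap Y)$; simplicity of $Y$ leaves $U \cap Y$ equal to $0$ or to $Y$, and the two options collapse $U$ either to $X$ or to $V$, both contradictions. Symmetry shows $Y$ is also isolated, so no path can exist.

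For the forward direction, the hypothesis that no path exists already forces $X \cap Y = 0$. I would first test the candidate $X+Y$: it is adjacent to both $X$ and $Y$ whenever it is a proper submodule, so the hypothesis forces $X+Y = V$. To establish simplicity of $X$, I would assume a nonzero proper submodule $X'$ of $X$ exists and use the bridge $X'+Y$; its adjacency to both $X$ and $Y$ is read off from $X' \neq 0$, $Y \neq 0$, together with $X \cap Y = 0$ (which ensures $X'+Y \neq X$ and $X'+Y \neq Y$). This would yield a forbidden path $(X, X'+Y, Y)$, so $X$ must be simple, and symmetry gives the same for $Y$.

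The one technical point, and the step I would flag as the main obstacle, is verifying that $X'+Y$ is a \emph{proper} submodule of $V$. Here the already-established equality $V = X \oplus Y$ is essential: if $X'+Y = V$, then any $x \in X$ can be written as $x'+y$ with $x' \in X'$ and $y \in Y$, and directness of the sum forces $y = 0$ and $x = x' \in X'$, giving $X = X'$ and contradicting the choice of $X'$ as a proper submodule of $X$. With this verification the length-two path is legitimate, the contradiction is reached, and the proof is complete.
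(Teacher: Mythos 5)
Your proof is correct and takes essentially the same approach as the paper's: both directions hinge on the length-two bridge $X'+Y$, and your elementwise verification that $X'+Y\neq V$ is just an unpacking of the modular-law step the paper invokes at that point. The only cosmetic difference is that for the converse you show $X$ and $Y$ are isolated vertices directly, while the paper observes that every proper submodule of $X\oplus Y$ is simple, so $\cG (V)$ has no edges at all.
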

\begin{proof} Suppose that there is no path in $\cG (V)$ between $X$ and $Y.$ In particular $X\cap Y=0.$ Take any nonzero submodule $Z$ of $X.$ Then $Z\cap Y=0.$ If $Z+Y\neq V,$ then $(X,Z+Y,Y)$ is a path of length $2.$ Hence, $Z\cap Y=0$ and $Z+Y=V$ for any nonzero submodule $Z$ of $X.$ In particular, $V$ is the direct sum $X\oplus Y$ of $X$ and $Y.$ Moreover, the modular law shows that $X$ is simple. See, for instance, \cite[page 71]{Warf} for the modular law.

Conversely, if $V$ is a direct sum of two simple modules, then any proper submodule of $V$ is simple so that $\cG (V)$ has no edges in this case.
\end{proof}

The proof of the previous result implies the following.

\begin{rem}
\label{2.2} Let $X$ and $Y$ be distinct proper $R$-submodules of $V.$ If there is a path in $\cG (V)$ between $X$ and $Y,$ then $\di _V(X,Y)\leq 2.$
\end{rem}

The following is an obvious consequence of \ref{2.1}, and it generalizes the related results of \cite{Chak}.
\begin{pro}
\label{2.3} Suppose that $V$ is not a simple $R$-module. Then, $\cG (V)$ is connected if and only if either $V$ is not a semisimple $R$-module, or $V$ is a semisimple $R$-module containing a direct sum of three simple $R$-modules.
\end{pro}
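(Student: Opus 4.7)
The plan is to derive this directly from Lemma \ref{2.1}, since that lemma already pins down exactly when two vertices fail to be connected by a path. The strategy is to argue that $\cG(V)$ is disconnected precisely when $V$ is a direct sum of two simple submodules, and then reformulate this condition into the statement of the proposition.

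First I would handle the $(\Leftarrow)$ direction by contrapositive: assume $\cG(V)$ is disconnected and show $V$ must be semisimple of length $2$. Since $V$ is not simple, it has at least two proper submodules, so the disconnectedness yields some pair $X,Y$ of distinct proper submodules with no path between them. Lemma \ref{2.1} then forces $X$ and $Y$ to be simple and $V=X\oplus Y$. In particular $V$ is semisimple and cannot contain a direct sum of three simple submodules, because the length of $V$ is $2$. This is precisely the negation of the right-hand disjunction.

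Next I would handle $(\Rightarrow)$ in the two cases given by the disjunction. If $V$ is not semisimple, then $V$ cannot equal $X\oplus Y$ for simple submodules $X,Y$ (such a decomposition would make $V$ semisimple), so by Lemma \ref{2.1} every pair of distinct proper submodules is joined by a path, and $\cG(V)$ is connected. If $V$ is semisimple and contains a direct sum $S_1\oplus S_2\oplus S_3$ of three simple submodules, then the composition length of $V$ is at least $3$, so again $V$ cannot be written as $X\oplus Y$ with $X,Y$ simple; Lemma \ref{2.1} then gives connectedness in the same way.

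I do not expect a real obstacle here: the whole argument is essentially a bookkeeping translation of Lemma \ref{2.1}. The only point that requires a tiny bit of care is making sure that the hypothesis "$V$ is not simple" is used to guarantee $\cG(V)$ has at least two vertices in the disconnectedness step, so that the existence of a pair $(X,Y)$ with no path makes sense. Everything else is immediate from the lemma.
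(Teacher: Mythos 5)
Your overall plan --- reading the proposition off Lemma \ref{2.1} --- is exactly the paper's intent (the paper calls \ref{2.3} an ``obvious consequence'' of \ref{2.1} and gives no further argument). But as written you have proved only one direction of the equivalence, twice. Your first paragraph establishes ``disconnected $\Rightarrow$ $V=X\oplus Y$ with $X,Y$ simple $\Rightarrow$ negation of the right-hand side,'' which is the contrapositive of ``right-hand side $\Rightarrow$ connected.'' Your second paragraph, labelled $(\Rightarrow)$, assumes the right-hand side and concludes connectedness --- that is the very same implication, proved directly rather than by contrapositive. What is missing is the genuine forward direction: if $\cG(V)$ is connected then the right-hand side holds, equivalently, if $V$ is semisimple, nonzero, not simple, and does not contain a direct sum of three simple submodules, then $\cG(V)$ is disconnected. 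This is where the ``conversely'' clause of Lemma \ref{2.1} is needed: such a $V$ is a direct sum of exactly two simple submodules $X\oplus Y$, and the lemma says there is no path between $X$ and $Y$, so $\cG(V)$ has two vertices in distinct components. The step is easy, but without it the ``if and only if'' is not proved.

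Two smaller points. The sentence ``Since $V$ is not simple, it has at least two proper submodules'' is not justified by that hypothesis alone (e.g.\ $\mathbb{Z}/p^2\mathbb{Z}$ is not simple yet has a unique proper submodule); what you actually need is that a disconnected graph has at least two vertices, which gives you the pair $X,Y$ directly. And the hypothesis ``$V$ is not simple'' is not there to produce that pair --- it is there to exclude the degenerate case where $\cG(V)$ is empty (hence connected) while the right-hand side fails.
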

The previous result is a restatement of \cite[Theorem 2.1]{Ak}, which is obtained in a slightly different way.

\begin{cor}
\label{2.4} If $\cG (V)$ has an edge then $\cG (V)$ is connected.
\end{cor}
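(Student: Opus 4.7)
The plan is to argue by contrapositive: I will assume that $\cG(V)$ is not connected and show that $\cG(V)$ has no edges. The statement is essentially already packaged in Lemma~\ref{2.1}, so the work is mostly bookkeeping to squeeze out the conclusion.

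First, I would handle the degenerate cases. If $V$ has fewer than two proper $R$-submodules (in particular if $V$ is simple), then $\cG(V)$ has at most one vertex, so it is vacuously connected and has no edges; the implication holds trivially. So I may assume $\cG(V)$ has at least two vertices and, by the disconnectedness hypothesis, that there exist distinct proper submodules $X$ and $Y$ with no path between them in $\cG(V)$.

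Now I would invoke Lemma~\ref{2.1} on the pair $(X,Y)$: this forces $X$ and $Y$ to be simple and $V = X\oplus Y$. At this point I appeal to the converse direction already established inside the proof of Lemma~\ref{2.1}: when $V$ is a direct sum of two simple modules, every proper submodule of $V$ is simple. Consequently, for any two distinct proper submodules $U$ and $W$ of $V$, we have $U\cap W=0$ (since each is simple and they are distinct), and therefore $\cG(V)$ has no edges at all. This contradicts the existence of an edge, finishing the proof.

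There is no real obstacle here; the only point worth checking carefully is that the disconnectedness hypothesis actually produces two vertices with no path between them (as opposed to being vacuously true), which is why I treat the trivial cases separately at the start. Alternatively, one could bypass Lemma~\ref{2.1} entirely and cite Proposition~\ref{2.3} directly: if $\cG(V)$ is disconnected and $V$ is not simple, then $V$ is semisimple without a direct summand of three simple modules, so $V$ decomposes as the direct sum of exactly two simple modules, and again all proper submodules are simple and pairwise disjoint.
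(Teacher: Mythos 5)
Your proof is correct and rests on the same ingredients as the paper's: the paper argues directly that an edge forces a non-simple proper submodule and then cites Proposition~\ref{2.3}, while you take the contrapositive through Lemma~\ref{2.1} (from which \ref{2.3} is derived), so the two arguments are essentially the same. Your care with the degenerate cases is fine but not needed, since the hypothesis that $\cG(V)$ has an edge already guarantees at least two vertices.
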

\begin{proof} Suppose that $\cG (V)$ has an edge. Then, $V$ must have two distinct proper submodules such that at least one of them must be not simple. The result follows from \ref{2.3}.
\end{proof}

\begin{rem}
\label{2.5} $\cG (V)$ has no edges if and only if $V$ has a composition series of length less than or equal to $2.$
\end{rem}
\begin{proof} Suppose that $\cG (V)$ has no edges. If $V$ has an ascending or descending chain of $n$ proper submodules, then there will be an edge between any two of these $n$ submodules. This shows that $V$ has a composition series of length less than or equal to $2.$ The converse is clear, because in this case if there is a proper submodule of $V$ then it must be simple.
\end{proof}

As an easy consequence of \ref{2.2} we state that if $\cG (V)$ is connected, then the possibilities for the diameter of $\cG (V)$ are $0,$ $1,$ and $2.$ The diameter of $\cG (V)$ is $0$ or $1$ if and only if the intersection of any two nonzero $R$-submodules of $V$ is nonzero (equivalently, every nonzero $R$-submodule of $V$ is indecomposable). Such a module is called a uniform module. See, for instance, \cite{Lam,Warf} for uniform modules.

Let $v$ be a vertex of a graph $\cG .$ By $\cG -v$ we mean the graph obtained from $\cG $ by removing the vertex $v$ and all the edges adjacent to $v.$ If $\cG -v$ has more connected components than $\cG $ has, then $v$ is called a cut vertex of $\cG .$

\begin{rem}
\label{2.6} $\cG (V)$ has a cut vertex if and only if the socle of $V$ is a direct sum of two simple $R$-modules and is a maximal $R$-submodule of $V.$ Moreover, in this case, the socle of $V$ is the unique cut vertex of $\cG (V).$
\end{rem}
\begin{proof} Let $X$ be a cut vertex of $\cG (V).$ There must be two distinct vertices $A$ and $B$ of $\cG (V)-X$ such that there is a path in $\cG (V)$ between $A$ and $B,$ but there is no path in $\cG (V)-X$ between $A$ and $B.$ Hence, $A\cap B=0,$ and \ref{2.2} implies that $(A,X,B)$ is a path in $\cG (V).$ In particular, $A\cap X\neq 0$ and $B\cap X\neq 0.$ Moreover, $X$ must be a maximal $R$-submodule of $V,$ because, for any proper submodule $Y$ of $V$ containing $X$ properly, $(A,Y,B)$ is a path in $\cG (V)-X$ between $A$ and $B.$ To see that $X$ is a direct sum of two simple modules, we take any nonzero submodule $A'$ of $A\cap X$ and any nonzero submodule $B'$ of $B\cap X.$ If $A'+B'\neq X,$ then $(A,A'+B',B)$ is a path in $\cG (V)-X$ between $A$ and $B.$ Hence, $X=A'+B',$ implying that $X$ is the direct sum of the modules $A\cap X$ and $B\cap X,$ which are necessarily simple as a consequence of the modular law. For the modular law see, for instance, \cite[page 71]{Warf}. Finally, we want to see that $X$ is equal to the socle of $V.$ Otherwise, there is a simple submodule $S$ of $V$ not contained in $X.$ But then, $$(A,(A\cap X)+S,(B\cap X)+S,B)$$ is a path in $\cG (V)-X.$

Conversely, letting $Z$ be the socle of $V,$ we suppose that $Z=U\oplus T$ is the direct sum of two simple modules $U$ and $T$, and $Z$ is a maximal submodule of $V.$ The vertices $U$ and $T$ are connected by the path $(U,Z,T)$ in $\cG (V).$ We will observe that there is no path in $\cG (V)-Z$ between $U$ and $T,$ implying that $Z$ is a cut vertex. Suppose for a moment that there is a path in $\cG (V)-Z$ between $U$ and $T.$ Let $U_1$ and $T_1$ be the vertices on this path such that $U_1$ is adjacent to $U$ and $T_1$ is adjacent to $T$ by some edges on the path. Then, $U\cap U_1\neq 0$ and $T\cap T_1\neq 0,$ and as $U$ and $T$ are both simple, $U\leq U_1$ and $T\leq T_1.$ Since $V$ has a composition series of length $3,$ we see that $U_1$ and $T_1$ are maximal submodules of $V,$ and the intersection of any two distinct maximal submodules of $V$ is simple. Therefore, $Z\cap U_1=U$ and $Z\cap T_1=T,$ implying that $0=U\cap T=(U_1\cap T_1)\cap Z.$ But, being simple, $U_1\cap T_1$ must be a submodule of the socle $Z,$ from which the last equality implies that $U_1\cap T_1=0.$ This is impossible because $U_1\cap T_1$ is a simple module.
\end{proof}

Let $e$ be an edge of a graph $\cG .$ By $\cG -e$ we mean the graph obtained from $\cG $ by removing the edge $e.$ But we keep the endpoints of $e$ so that the vertices of  $\cG -e$ and $\cG $ are the same. If $\cG -e$ has more connected components than $\cG $ has, then $e$ is called a cut edge of $\cG .$

\begin{rem}
\label{2.7} $\cG (V)$ has a cut edge if and only if $V$ has a composition series of length $3,$ and $V$ has a simple $R$-submodule that is contained in a unique maximal $R$-submodule of $V.$ Moreover, in this case, the cut edges in $\cG (V)$ are precisely the edges incident to the mentioned simple modules.
\end{rem}
\begin{proof} Let $e$ be a cut edge of $\cG (V)$ connecting the vertices $A$ and $B.$ There must be two vertices such that every path between them contains the edge $e.$ Thus, there must be no path between $A$ and $B$ other than $e.$ As $A\cap B\neq 0,$ if $A,B,$ and $A\cap B$ are all distinct then $(A,A\cap B,B)$ is a path between $A$ and $B.$ Hence, we may assume that $B< A.$ If there is a submodule $X$ of $V$ properly lying between the submodules $0$ and $B,$ then $(A,X,B)$ is a path between $A$ and $B.$ This shows that $B$ must be simple. Similarly, we see that $A/B$ and $V/A$ are also simple. So far, we have shown that $0<B<A<V$ is a composition series of $V$ of length $3.$

If $J$ is a maximal submodule of $V$ other than $A$ that contains $B,$ then $(A,J,B)$ is a path between $A$ and $B.$ Hence, the only maximal submodule of $V$ containing $B$ is $A.$

Conversely, suppose $V$ has a composition series of length $3$ and it has a simple submodule $S$ that is contained in a unique maximal submodule $W$ of $V.$ We will show that the edge $f$ between $S$ and $W$ is a cut edge. Suppose for a moment that $f$ is not a cut edge. There must be a path other than $f$ between $S$ and $W.$ Let $T$ be the vertex on this path which is adjacent to $S$ by an edge on the path. In particular, $T\neq W.$ As $S$ is simple, we see that $S<T.$ Since the composition length of $V$ is $3,$ it now follows that $T$ is a maximal submodule of $V$ containing $S.$
\end{proof}

\section{Cycles}
In this section we study the existence of cycles in $\cG (V).$ We begin with an easy observation.

\begin{lem}
\label{3.1} Let $m\geq 3$ be a natural number. If $\cG (V)$ has no $m$-cycles, then $V$ has a composition series of length less than or equal to $m.$
\end{lem}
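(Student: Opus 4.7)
The plan is to argue by contrapositive: assuming $V$ has no composition series of length at most $m$, I will exhibit an $m$-cycle in $\cG(V)$. The key observation is that any strictly ascending chain $W_1<W_2<\cdots<W_m$ of $m$ nonzero proper $R$-submodules of $V$ already produces an $m$-cycle, because $W_i\cap W_j=W_{\min(i,j)}\neq 0$ makes $\{W_1,\ldots,W_m\}$ a clique in $\cG(V)$, and, since $m\geq 3$, the cyclic ordering $(W_1,W_2,\ldots,W_m,W_1)$ traverses genuine edges of this clique. Thus the whole task reduces to producing such a chain.

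If $V$ has a composition series $0=V_0<V_1<\cdots<V_n=V$ of length $n>m$, I take $V_1,V_2,\ldots,V_m$; these are distinct and nonzero, form a chain, and are all proper in $V$ since $V_m<V_{m+1}\leq V_n=V$. If instead $V$ has no composition series at all, I invoke the standard fact that a module admits a composition series precisely when it is both Noetherian and Artinian, so $V$ either contains an infinite strictly ascending chain of submodules (all of which are automatically proper in $V$, since such a chain cannot continue past $V$) or an infinite strictly descending chain of nonzero submodules; in either case, truncating to $m$ consecutive terms and shifting the index to avoid $0$ and $V$ produces the required chain.

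The main obstacle, such as it is, reduces to the small bookkeeping of ensuring that the $m$ submodules extracted are simultaneously distinct, nonzero, and proper in $V$; this is immediate in each case, so there is no substantive difficulty beyond the clique-to-cycle reduction already noted.
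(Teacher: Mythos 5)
Your proof is correct and follows essentially the same route as the paper's: extract a chain of $m$ nonzero proper submodules (from a long composition series, or from an infinite ascending/descending chain when none exists), note that such a chain is a clique in $\cG(V)$, and read off an $m$-cycle. You simply spell out the bookkeeping that the paper leaves implicit.
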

\begin{proof} If $V$ has an ascending or descending chain of $n$ proper submodules, then it is clear that the subgraph of $\cG (V)$ induced by these $n$ submodules is the complete graph. So, in particular, there is an $n$-cycle in $\cG (V).$ The result follows.
\end{proof}

\begin{pro}
\label{3.2} The following conditions are equivalent:
\begin{description}
  \item[{\rm (i)}] $\cG (V)$ has no cycles.
  \item[{\rm (ii)}] $\cG (V)$ has no $3$-cycles.
  \item[{\rm (iii)}] Either $V$ has a composition series of length less than or equal to $2,$ or $V$ has a composition series of length $3$ and it has a unique maximal $R$-submodule.
\end{description}
\end{pro}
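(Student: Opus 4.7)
My plan is to establish the cycle of implications (i)$\Rightarrow$(ii)$\Rightarrow$(iii)$\Rightarrow$(i). The first implication is immediate since every $3$-cycle is a cycle.

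For (ii)$\Rightarrow$(iii), I would first apply Lemma~\ref{3.1} with $m=3$ to conclude that $V$ has a composition series of length at most $3$; the case of length at most $2$ is the first alternative of (iii), so the substantive case is composition length exactly $3$. I would then suppose for contradiction that $V$ admits two distinct maximal $R$-submodules $M_1$ and $M_2$. Since $V/M_1$ is simple and $M_2\not\subseteq M_1$, maximality forces $M_1+M_2=V$, and the isomorphism $V/M_1\cong M_2/(M_1\cap M_2)$ together with additivity of composition length forces $M_1\cap M_2$ to be simple, and in particular nonzero. Then $M_1$, $M_2$ and $M_1\cap M_2$ are three distinct proper submodules whose three pairwise intersections all coincide with the nonzero module $M_1\cap M_2$, producing a $3$-cycle in $\cG(V)$ and contradicting (ii).

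For (iii)$\Rightarrow$(i), I would treat the two alternatives of (iii) separately. If $V$ has a composition series of length at most $2$ then Remark~\ref{2.5} says $\cG(V)$ has no edges, hence no cycles. In the remaining case let $M$ be the unique maximal $R$-submodule of $V$; then $M$ has composition length $2$ and every proper submodule of $V$ is contained in $M$. A brief argument using uniqueness of $M$ shows that every simple submodule of $V$ already lies inside $M$, so the proper submodules of $V$ are exactly $M$ together with the simple submodules of $V$. Since distinct simple submodules intersect trivially while $M$ meets every simple submodule in the submodule itself, $\cG(V)$ is a star with centre $M$, and therefore acyclic.

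The principal obstacle is the step in (ii)$\Rightarrow$(iii) which forces two distinct maximal submodules of a module of composition length $3$ to meet nontrivially; once this is secured, the required $3$-cycle materialises at once. The brief argument mentioned in (iii)$\Rightarrow$(i) is the observation that a simple submodule $S$ not contained in $M$ would satisfy $V=S\oplus M$ and, via any proper nontrivial submodule of the non-simple module $M$, produce a second maximal submodule of $V$ distinct from $M$, contradicting uniqueness.
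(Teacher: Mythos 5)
Your proof is correct, and its overall skeleton --- invoke Lemma~\ref{3.1}, analyse the length-$3$ case through its maximal submodules, and exhibit the star structure for the converse --- matches the paper's. The one place where you genuinely diverge is the heart of (ii)$\Rightarrow$(iii): you show directly, via Jordan--H\"older additivity applied to $M_2/(M_1\cap M_2)\cong V/M_1$, that two distinct maximal submodules of a length-$3$ module must meet in a simple, hence nonzero, submodule, and then read off the triangle on $M_1$, $M_2$, $M_1\cap M_2$. The paper runs the same contradiction in the opposite direction: it uses the absence of $3$-cycles to force $J_1\cap J_2=0$ and then derives a contradiction from the resulting isomorphism of $V$ with $V/J_1\times V/J_2$, which would make the composition length equal to $2$. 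Your version is slightly more elementary (no appeal to the product decomposition) and isolates the reusable fact that distinct maximal submodules of a module of composition length at least $3$ intersect nontrivially; the paper's version trades that length bookkeeping for the canonical injection into the product. For (iii)$\Rightarrow$(i) both arguments identify $\cG(V)$ as the star $\K_{1,c}$ centred at the unique maximal submodule $M$; your auxiliary observation that a simple submodule outside $M$ would split off and manufacture a second maximal submodule is a correct and harmless elaboration of the paper's one-line assertion that every proper submodule lies in $M$ (which already follows from finite length plus uniqueness of the maximal submodule).
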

\begin{proof} (i)$\implies $(ii): Obvious.

(ii)$\implies $(iii): Suppose that $\cG (V)$ has no $3$-cycles. It follows from \ref{3.1} that $V$ has a composition series of length less than or equal to $3.$ Assume that the composition length of $V$ is $3.$ We want to show that $V$ has a unique maximal submodule. Suppose for a moment that $V$ has two maximal submodules $J_1$ and $J_2.$ It is clear that $J_1,J_2,$ and $J_1\cap J_2$ are distinct. We see that $J_1\cap J_2=0,$ because, otherwise, $(J_1,J_1\cap J_2,J_2,J_1)$ is a $3$-cycle. As $J_1\cap J_2=0,$ the natural map $$V\rightarrow V/J_1\times V/J_2$$ is an injective $R$-module homomorphism, which must be surjective because $J_1+J_2=M.$ But then, $V$ is isomorphic to the direct sum of the simple modules $V/J_1$ and $V/J_2,$ implying that the composition length of $V$ is $2.$

(iii)$\implies $(i): If $V$ has a composition series of length less than or equal to $2,$ then any nonzero submodule of $V$ is simple, so that $\cG (V)$ has no edges.

Suppose now that the composition length of $V$ is $3,$ and that $V$ has a unique maximal submodule $J.$ Then, every proper submodule of $V$ is inside $J.$ As the composition length of $J$ is $2,$ every proper submodule of $V$ different from $J$ is simple. Consequently, $\cG (V)$ is the star $\K _{1,c },$ where $J$ is adjacent to all the other vertices (= all the proper submodules of $J$), and there are no edges between the vertices other than $J,$ and $c$ denotes the cardinality of the set of all the proper submodules of $J.$ Hence, we have observed in both cases that there are no cycles.
\end{proof}

A graph $\cG $ is called bipartite if the vertex set of $\cG $ can be written as a disjoint union of two (possibly empty) sets $V_1$ and $V_2$ such that any edge in $\cG $ connects a vertex in $V_1$ with a vertex in $V_2.$ See, for instance, \cite{Ore}.

\begin{cor}
\label{3.3} $\cG (V)$ is bipartite if and only if either $V$ has a composition series of length less than or equal to $2,$ or $V$ has a composition series of length $3$ and it has a unique maximal $R$-submodule.
\end{cor}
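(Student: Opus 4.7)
The plan is to deduce this corollary directly from Proposition \ref{3.2} via the standard fact that bipartite graphs contain no odd cycles (and in particular no $3$-cycles), together with the observation that acyclic graphs are automatically bipartite.

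For the forward direction, suppose $\cG(V)$ is bipartite with vertex partition $V_1 \sqcup V_2$. Any $3$-cycle $(A,B,C,A)$ would require each of the three vertices to be adjacent to the other two; but in a bipartite graph every edge crosses the partition, and three vertices cannot be pairwise partitioned into two classes so that each pair crosses. Hence $\cG(V)$ has no $3$-cycles, and Proposition \ref{3.2}, equivalence (ii)$\Longleftrightarrow$(iii), yields condition (iii).

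For the converse, assume condition (iii). By Proposition \ref{3.2}, (iii)$\Longrightarrow$(i), $\cG(V)$ has no cycles at all. An acyclic graph is bipartite: one may, for instance, pick any spanning forest (which equals $\cG(V)$ itself here) and $2$-color it by alternating colors along each tree starting from an arbitrary root in each connected component. Concretely, in the case where $V$ has a composition series of length $\leq 2$, Remark \ref{2.5} shows $\cG(V)$ has no edges and is trivially bipartite; in the case where $V$ has composition length $3$ with unique maximal submodule $J$, the proof of Proposition \ref{3.2} shows $\cG(V)$ is the star $\K_{1,c}$, which is bipartite with parts $\{J\}$ and the set of all proper submodules of $J$.

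There is no real obstacle here; the corollary is essentially a restatement of Proposition \ref{3.2} combined with two elementary facts from graph theory (bipartite $\Rightarrow$ no triangle; acyclic $\Rightarrow$ bipartite). The only point to be careful about is that bipartiteness is in general equivalent to the absence of all odd cycles, not just of $3$-cycles, so the forward implication uses the weaker consequence that triangles are forbidden, which is all Proposition \ref{3.2} needs.
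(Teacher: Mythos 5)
Your proof is correct and follows essentially the same route as the paper: the forward direction uses that bipartite graphs have no (odd, hence no) $3$-cycles together with Proposition \ref{3.2}, and the converse uses that an acyclic graph (here the empty graph or the star $\K_{1,c}$ from the proof of \ref{3.2}) is bipartite. No gaps.
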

\begin{proof} Suppose that $\cG (V)$ is bipartite. Then, it is clear and well known that $\cG (V)$ has no odd cycles (see, for instance, \cite[Theorem 7.1.1]{Ore}). The result now follows from \ref{3.2}. The converse follows from the last part of the proof of \ref{3.2}.
\end{proof}

The results \ref{3.2} and \ref{3.3} generalize the related results of \cite{Chak}.

\section{Domination}
In this section we determine the domination number of $\cG(V).$ This is already done in the special cases where $R$ is a commutative ring and $V=R,$ and where $R$ is a finite field and $V$ is a finite dimensional vector space over $R,$ see \cite{Jaf} and the references there. Here, without using the results in these special cases, we determine the domination number of $\cG(V)$ in the most general case where the ring $R$ and the $R$-module $V$ are arbitrary.

We sometimes use the following result of \cite{Fla} which describes the submodules of the direct sum of two modules. It is a simple modification of a result for groups known as Goursat's lemma.

\begin{lem}{\rm \cite[Theorem 3.1]{Fla}}
\label{4.1}
Let $U$ and $W$ be $R$-modules. Then:
\begin{description}
  \item[{\rm (1)}] There is a bijective map $\Psi $ from the set of all $R$-submodules of the direct sum $U\times W$ to the set of all quintuples $(U_1,U_2,\theta ,W_2,W_1)$ where $U_2\leq U_1\leq U$ and $W_2\leq W_1\leq W$ and $\theta :U_1/U_2\rightarrow W_1/W_2$ is an isomorphism of $R$-modules.
  \item[{\rm (2)}] $\Psi $ sends an $R$-submodule $M$ of $U\times W$ to the quintuple $$\big (p_1(M),k_1(M),\theta _M,k_2(M),p_2(M)\big )$$
      where $$p_1(M)=\{u\in U:\exists w\in W,(u,w)\in M \},{\ }{\ }k_1(M)=\{u\in U:(u,0)\in M \},$$
      $$p_2(M)=\{w\in W:\exists u\in U,(u,w)\in M \},{\ }{\ }k_2(M)=\{w\in W:(0,w)\in M \},$$
      and, for any $(u,w)\in p_1(M)\times p_2(M),$ the isomorphism $\theta _M$ sends $u+k_1(M)$ to $w+k_2(M)$ if and only if $(u,w)\in M.$ Furthermore, the three $R$-modules
      $$p_1(M)\big{/}k_1(M),{\ }{\ }{\ }p_2(M)\big{/}k_2(M),{\ }{\ }{\ }and{\ }{\ }{\ }M\big{/}\big (k_1(M)\times k_2(M)\big )$$
      are isomorphic.
  \item[{\rm (3)}] For any quintuple $(U_1,U_2,\theta ,W_2,W_1)$ where $U_2\leq U_1\leq U$ and $W_2\leq W_1\leq W$ and $\theta :U_1/U_2\rightarrow W_1/W_2$ is an isomorphism of $R$-modules, the inverse of $\Psi $ sends $$(U_1,U_2,\theta ,W_2,W_1)$$ to the module
      $$\{(u,w)\in U_1\times W_1:\theta (u+U_2)=w+W_2\}.$$
\end{description}
\end{lem}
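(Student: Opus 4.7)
The plan is to proceed in the classical Goursat style: construct $\Psi $ as described in part (2), construct its candidate inverse $\Phi $ as described in part (3), and verify that these two assignments are mutually inverse. Bijectivity in (1) then follows from (2) and (3). Since we are working with modules (rather than groups) nothing beyond $R$-linearity is needed, so there are no issues with non-normality.

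First, fix an $R$-submodule $M\leq U\times W$ and define the four submodules $p_1(M),k_1(M),p_2(M),k_2(M)$ as in the statement; one checks immediately that $k_1(M)\leq p_1(M)\leq U$ and $k_2(M)\leq p_2(M)\leq W$. The key step is to show that $\theta _M$ is a well-defined $R$-isomorphism. I would begin with the following closure observation: if $(u,w),(u',w')\in M$ and $u-u'\in k_1(M),$ then $(u-u',0)\in M,$ hence $(u',w)=(u,w)-(u-u',0)\in M,$ and subtracting $(u',w')\in M$ yields $(0,w-w')\in M,$ so $w-w'\in k_2(M).$ This gives well-definedness; $R$-linearity is automatic; surjectivity is immediate from the definition of $p_2(M);$ and injectivity follows from the symmetric version of the closure observation. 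The third isomorphism
$$M\big /\big (k_1(M)\times k_2(M)\big )\cong p_1(M)/k_1(M)$$
is obtained by applying the first isomorphism theorem to the $R$-linear surjection $M\rightarrow p_1(M)/k_1(M),{\ }(u,w)\mapsto u+k_1(M);$ its kernel is exactly $k_1(M)\times k_2(M)$ by the same closure observation.

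Next, for (3), given a quintuple $(U_1,U_2,\theta ,W_2,W_1),$ I define $\Phi $ by sending it to the subset $M=\{(u,w)\in U_1\times W_1:\theta (u+U_2)=w+W_2\}$. This is an $R$-submodule of $U\times W,$ because it is the kernel of the $R$-linear map $U_1\times W_1\rightarrow W_1/W_2$ sending $(u,w)$ to $\theta (u+U_2)-(w+W_2).$

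Finally, I verify $\Psi \circ \Phi =\text{id}$ and $\Phi \circ \Psi =\text{id}.$ For $\Psi \circ \Phi ,$ starting from a quintuple and forming $M$ as above, I show that the four computed submodules recover $U_1,U_2,W_1,W_2$ and that the induced isomorphism coincides with $\theta .$ For instance, $p_1(M)=U_1$ because each $u\in U_1$ has some $w\in W_1$ with $w+W_2=\theta (u+U_2),$ and $k_1(M)=\{u\in U_1:\theta (u+U_2)=0+W_2\}=U_2$ by the injectivity of $\theta ;$ the cases of $p_2,k_2$ are symmetric, and agreement of the isomorphism with $\theta $ is built into the defining condition of $M.$ For $\Phi \circ \Psi ,$ starting from $M,$ the module reconstructed from $\big (p_1(M),k_1(M),\theta _M,k_2(M),p_2(M)\big )$ coincides with $M$ by the very definition of $\theta _M.$ The only step requiring genuine care is the well-definedness of $\theta _M;$ everything else is routine bookkeeping and uses only the first isomorphism theorem and the closure observation above.
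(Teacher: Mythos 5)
Your proposal is correct. Note that the paper itself offers no proof of this lemma --- it is quoted verbatim from Flannery's Theorem~3.1 --- so there is no internal argument to compare against; your construction of $\Psi$ and of the candidate inverse $\Phi$, the closure observation establishing that $\theta_M$ is well defined and injective, and the two composite identities $\Psi\circ\Phi=\mathrm{id}$ and $\Phi\circ\Psi=\mathrm{id}$ constitute the standard Goursat-style proof, and each step you flag as routine genuinely is routine (in particular the ``only if'' half of the characterization of $\theta_M$ and the identification of the kernel of $M\rightarrow p_1(M)/k_1(M)$ with $k_1(M)\times k_2(M)$ both follow from the same closure observation).
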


Let $U$ and $W$ be $R$-submodules of an $R$-module $V$ such that $V=U\oplus W.$ Although \ref{4.1} is stated for the external direct sum of two modules, it also describes the submodules of the internal direct sum $U\oplus W$ because the $R$-modules $U\oplus W$ and $U\times W$ are isomorphic. Indeed, \ref{4.1} is stated in \cite{Fla} for an internal direct sum of two submodules of a module.

Let $\cG $ be a graph. By a dominating set for $\cG$ we mean a subset $D$ of the vertex set of $\cG $ such that every vertex not in $D$ is joined to at least one vertex in $D$ by some edge. A dominating set $D$ is called a minimal dominating set if $D'$ is not a dominating set for any subset $D'$ of $D$ with $D'\neq D.$ The domination number of $\cG $ is the smallest of the  cardinalities of the minimal dominating sets for $\cG .$ See, for instance, \cite{Ore}. An infinite graph may not have a minimal dominating set, in which case, the domination number is not defined. However, we observe in this section that the intersection graph of any module has a minimal dominating set.

In our case, a set $\cS$ of proper $R$-submodules of $V$ is a dominating set for $\cG (V)$ if and only if for any proper $R$-submodule $U$ of $V$ there is a $W$ in $\cS $ such that $U\cap W\neq 0.$

The existence of a maximal element mentioned in the following result is guaranteed by Zorn's lemma.
\begin{lem}
\label{4.2} Given any nonzero $R$-submodule $U$ of $V$ which is not simple, take a maximal element $W$ of the set  $\{X\leq V:X\cap U= 0\}$, a poset with respect to the subset or equal to relation. If $\cA $ is a dominating set for $\cG(U),$ then the set $\{T+W:T\in \cA \}$ is a dominating set for $\cG (V).$
\end{lem}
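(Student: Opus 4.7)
The plan is to verify first that each $T+W$ is a legitimate vertex of $\cG(V)$, and then to show that every other proper submodule of $V$ is adjacent in $\cG(V)$ to some $T+W$, splitting into cases according to its intersection with $W$.

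I would begin by checking that $T+W$ is a proper nonzero submodule of $V$ for each $T\in \cA$. Nonzeroness is immediate since $T\neq 0$, and by the modular law, $(T+W)\cap U = T + (W\cap U) = T$, which is proper in $U$; hence $T+W\neq V$. So the proposed set is genuinely a set of vertices of $\cG(V)$.

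For the domination step, fix an arbitrary proper submodule $Z$ of $V$ with $Z\neq T+W$ for every $T\in \cA$, and show $Z\cap(T+W)\neq 0$ for some $T\in \cA$. In the first case $Z\cap W\neq 0$, the inclusion $Z\cap(T+W)\supseteq Z\cap W\neq 0$ works for every $T\in \cA$. In the second case $Z\cap W=0$, the submodule $W+Z$ strictly contains $W$ (because $Z\neq 0$), so the maximality of $W$ forces $Y:=U\cap(Z+W)$ to be a nonzero submodule of $U$. I would split this case further. If $Y$ is a proper submodule of $U$, then $Y$ is a vertex of $\cG(U)$, so by the hypothesis on $\cA$ there exists $T\in \cA$ with $Y\cap T\neq 0$; taking a nonzero $y\in Y\cap T$ and writing $y=z+w$ with $z\in Z$, $w\in W$, the element $z=y-w$ lies in $Z\cap(T+W)$, and it is nonzero because $z=0$ would force $y=w\in U\cap W=0$. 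If instead $Y=U$, then every $T\in \cA$ satisfies $T\le U\le Z+W$, so decomposing a nonzero $t\in T$ as $t=z+w$ with $z\in Z$, $w\in W$ (and observing that $Z\cap W=0$ makes this decomposition safe) produces $0\neq z\in Z\cap(T+W)$ by the same reasoning as above.

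The main obstacle I expect is the subcase $Y=U$, where the dominance of $\cA$ in $\cG(U)$ is of no help; the saving observation is that $Z\cap W=0$ lets one decompose any element of $U$ uniquely along $Z$ and $W$, and the nonzero $Z$-component of any nonzero element of $T\subseteq U$ lands in $Z\cap(T+W)$. The maximality of $W$ is invoked precisely once, at the transition from $Z\cap W=0$ to $Y\neq 0$, which is the only step where the defining property of $W$ is really needed.
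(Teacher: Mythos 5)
Your proof is correct, but it routes the argument differently from the paper. The paper cases on $Y\cap U$ (a proper submodule of $U$, all of $U$, or zero), and in its hardest case ($Y\cap U=0$ with $Y\not\leq W$) it must manufacture a proper nonzero cyclic submodule $Ru$ of $U$ inside $U\cap(Y+W)$ so that there is something in $\cG(U)$ for $\cA$ to dominate; this is where the hypothesis that $U$ is not simple does real work there. You instead case on $Z\cap W$, and when $Z\cap W=0$ you split on whether $Y:=U\cap(Z+W)$ is proper in $U$ or equal to $U$: in the first subcase $Y$ is itself a vertex of $\cG(U)$ and domination applies directly, and in the second every $T\in\cA$ satisfies $T\leq U\leq Z+W$, so any nonzero $t\in T$ yields a nonzero $Z$-component lying in $Z\cap(T+W)$ with no appeal to domination at all. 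This sidesteps the $Ru$ device entirely; in your version the non-simplicity of $U$ is needed only to guarantee that $\cG(U)$ has a vertex and hence that $\cA\neq\emptyset$, which every case of your argument tacitly uses and which you could flag explicitly. Your preliminary check that $T+W$ is a proper submodule, via the modular law $(T+W)\cap U=T+(W\cap U)=T$, is a point the paper leaves implicit and is worth keeping. One small remark: in the subcase $Y=U$ the decomposition $t=z+w$ exists simply because $t\in Z+W$; what actually forces $z\neq 0$ is $U\cap W=0$ (if $z=0$ then $t=w\in U\cap W$), which is the argument you in fact give, so the aside that $Z\cap W=0$ makes the decomposition ``safe'' is dispensable.
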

\begin{proof} Take a proper $R$-submodule $Y$ of $V.$ We want to show that $Y\cap (T+W)\neq 0$ for some $T\in \cA .$ We have three possibilities for the intersection $Y\cap U.$ It is either a proper submodule of $U,$ or $U,$ or else $0.$

If $Y \cap U$ is a proper submodule of $U,$ then there is a $T_1\in \cA$ such that $(Y\cap U)\cap T_1\neq 0$ as $\cA $ is a dominating set for $\cG (U).$ As $(Y\cap U)\cap T_1\subseteq Y\cap (T_1+W),$ this implies that $Y\cap (T_1+W)\neq 0.$

If $Y\cap U=U,$ then $Y$ contains $U$ and so contains every element of $\cA .$ Thus, for any $T_2\in \cA ,$ we have that $0\neq T_2\subseteq Y\cap (T_2+W).$

Suppose now that $Y\cap U=0.$ If $Y\leq W$ then $Y\cap (T_3+W)\neq 0$ for any $T_3\in \cA .$ So we assume that $Y$ is not contained in $W,$ implying that $W$ is a proper subset of $Y+W.$ The maximality of $W$ gives that $U\cap (Y+W)\neq 0.$ As $U$ is not simple, there is a nonzero element $u$ in $U\cap (Y+W)$ such that $0\neq Ru\neq U.$ (If $U\cap (Y+W)$ is not equal to $U,$ then $u$ may be any element of $U\cap (Y+W)$). Since $\cA$ is a dominating set for $\cG (U),$ there is a $T_4\in \cA $ such that $Ru\cap T_4\neq 0.$ Hence, there is an $r\in R$ such that $ru$ is a nonzero element of $T_4.$ As $ru$ is in $Y+W$ (because $u$ is in it), we write $ru=y+w$ for some $y\in Y$ and $w\in W.$ Note that both of $y$ and $w$ are nonzero, because $Y\cap U= 0$ and $Y\cap W= 0.$ Then, $y=ru-w\in (T_4+W)$ is a nonzero element of $Y\cap (T_4+W).$
\end{proof}

Assuming that the domination numbers of $\cG (U)$ and $\cG (V)$ are defined, the above result implies that the domination number of $\cG (U)$ is greater than or equal to the domination number of $\cG (V)$ for any non zero and non simple $R$-submodule $U$ of $V.$ In the next result we see a case in which a converse inequality holds.

\begin{lem}
\label{4.3}  Let $V$ be a semisimple $R$-module containing a direct sum of three simple modules, and let $\cC$ be the set of all $R$-submodules of $V$ that can be written as a direct sum of two simple $R$-modules. Let $\cA $ be a minimal dominating set for $\cG (V).$ Then, there is a $U\in \cC$ such that $U\cap A\neq U$ for all $A\in \cA .$ Moreover, for any such $U,$ the nonzero elements of the set $\{T\cap U:T\in \cA \}$ forms a dominating set for $\cG (U).$
\end{lem}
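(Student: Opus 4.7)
The statement has two parts: the existence of $U\in \cC$ not contained in any element of $\cA ,$ and the assertion that the nonzero intersections $T\cap U$ (for $T\in \cA $) dominate $\cG (U).$ My plan is to prove the first part by contradiction using the minimality of $\cA ,$ and then to derive the second part almost formally from the fact that every element of $\cC$ has composition length $2.$

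For the first part, suppose for contradiction that every $U\in \cC$ is contained in some member of $\cA .$ Pick any $A_0\in \cA .$ Minimality says $\cA \setminus \{A_0\}$ is not a dominating set, so there is a proper submodule $Y$ of $V$ with $Y\notin \cA \setminus \{A_0\}$ and $Y\cap A=0$ for every $A\in \cA \setminus \{A_0\}$; since $\cA$ does dominate the vertex $Y,$ one also has $Y\cap A_0\neq 0.$ As $V$ is semisimple, $Y\cap A_0$ contains a simple submodule $S.$ Now for any simple submodule $T$ of $V$ with $T\neq S$ the intersection $S\cap T$ is zero, so $S\oplus T\in \cC ;$ the contradiction hypothesis then yields some $A\in \cA$ with $S\oplus T\leq A.$ If $A\neq A_0$ then $S\leq A\cap Y=0,$ which is impossible; hence $A=A_0$ and so $T\leq A_0.$ Thus every simple submodule of $V$ lies in $A_0,$ and since $V$ is semisimple we have $V=\Soc (V)\leq A_0,$ contradicting the properness of $A_0.$

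For the second part, write $U=P\oplus Q$ with $P$ and $Q$ simple. The composition length of $U$ is $2,$ so its nonzero proper submodules are exactly the simple ones, and distinct simple submodules of $U$ have zero intersection; consequently $\cG (U)$ is edgeless, and a dominating set for $\cG (U)$ must contain every vertex. For any $T\in \cA ,$ the submodule $T\cap U$ of $U$ cannot equal $U$ (since $U\not \leq A$ for every $A\in \cA$ by the first part), so $T\cap U$ is either $0$ or a simple submodule of $U.$ Conversely, given any simple submodule $S$ of $U,$ the dominating property of $\cA$ provides a $T\in \cA$ with $S\cap T\neq 0,$ whence $S\leq T$ by simplicity, and therefore $S\leq T\cap U.$ As $T\cap U$ is itself a simple submodule containing the simple module $S,$ one gets $T\cap U=S.$ Hence $\{T\cap U:T\in \cA \}\setminus \{0\}$ is precisely the vertex set of $\cG (U),$ which certainly dominates $\cG (U).$

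The principal obstacle is the first part: one has to manufacture a contradiction out of the purely combinatorial hypothesis that every $U\in \cC$ is contained in some $A\in \cA .$ The crucial move is to fix the simple submodule $S$ supplied by the minimality witness of a single $A_0\in \cA $ and to let only the second simple summand $T$ vary, which confines the containing $A$ to $A_0$ and lets semisimplicity finish the argument. The second part is then just a structural observation about modules of composition length two.
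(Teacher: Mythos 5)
Your proof is correct. For the existence of $U,$ you take a genuinely different route from the paper: the paper fixes one $W\in\cC$ together with an $A\in\cA$ containing it and then verifies directly that $\cA-\{A\}$ still dominates $\cG(V)$ (for each vertex $X$ meeting $A,$ it builds $W'=S_1+S_2$ with $S_1\leq X\cap A$ and $S_2$ inside a complement of $A,$ so that the hypothesized $A'\supseteq W'$ is forced to differ from $A$ yet meet $X$), thereby contradicting minimality. You instead invoke minimality once, at an arbitrary $A_0,$ to extract a ``private'' vertex $Y$ meeting no member of $\cA$ other than $A_0,$ pin a simple $S\leq Y\cap A_0,$ and let the second summand $T$ range over all other simples; the containment $S\oplus T\leq A$ forces $A=A_0,$ so $\Soc(V)=V\leq A_0,$ contradicting properness of $A_0.$ The two arguments share the same engine (a sum of two simples with one summand anchored in an intersection), but yours trades the paper's semisimple-complement construction for the private-neighbour consequence of minimality and ends at a cleaner absurdity ($A_0=V$); the paper's version has the mild advantage of identifying exactly which element of $\cA$ is redundant. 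Your treatment of the second part, which the paper dismisses as trivial, is complete and correct: since $U$ has composition length $2,$ the graph $\cG(U)$ is edgeless and its only dominating set is its full vertex set, which you show equals $\{T\cap U:T\in\cA\}\setminus\{0\}.$ Two cosmetic remarks: when you assert $Y\cap A_0\neq 0,$ the case $Y=A_0$ is not covered by the domination property but holds trivially since $A_0\neq 0$; similarly, when you produce $T\in\cA$ with $S\cap T\neq 0$ for a simple $S\leq U,$ the case $S\in\cA$ is handled by taking $T=S.$ Neither affects the validity of the argument.
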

\begin{proof} We first show the existence of such an element $U$ of $\cC .$ Suppose the contrary. Take any element $W$ of $\cC .$ By the assumption there is an $A\in \cA$ such that $W\cap A= W.$ We will show that $\cA -\{ A\}$ forms a dominating set for $\cG (V),$ which contradicts to the minimality of $\cA .$

Let $X$ be a proper $R$-submodule of $V.$ We need to show that the intersection of $X$ with an element of $\cA -\{A\}$ is nonzero. As $\cA $ is a dominating set for $\cG (V),$ the intersection of $X$ with an element of $\cA $ is nonzero. So, we may assume that $X\cap A\neq 0.$ As $V$ is semisimple, $A$ has a complement $B$ in $V$ so that we write $V=A\oplus B.$ Note that $B$ is nonzero as $A\in \cA .$ Again by using the semisimplicity of $V,$ we choose a simple $R$-submodule $S_1$ of $X\cap A$ and a simple $R$-submodule $S_2$ of $B.$ We then let $W'=S_1+S_2.$ As $A\cap B=0,$ the simple modules $S_1$ and $S_2$ are different so that $W'\in \cC .$ Since we assumed the contrary, there is a $A'\in \cA $ such that $W'\cap A'=W'.$ Now, $S_2$ is in $A'$ but not in $A,$ proving that $A'\neq A,$ so $A'\in \cA -\{A\}.$ Moreover, $X\cap A'\neq 0$ because it contains $S_1.$

Having proved the existence of such an element $U$ of $\cC,$ we see that what is left is trivial to justify.
\end{proof}

We now state the main result of this section. For a graph $\cG $ we denote by $\gamma (\cG )$ the domination number of $\cG .$ If $V$ is a simple $R$-module then, of course, $\cG (V)$ has no vertices so that $\gamma (\cG (V))=0.$
\begin{thm}
\label{4.4} Suppose that $V$ is not a simple $R$-module. Then:
\begin{description}
  \item {\rm (1)} $\gamma (\cG (V))=1$ if and only if $V$ is not a semisimple $R$-module.
  \item {\rm (2)} $\gamma (\cG (V))=2$ if and only if $V$ is a semisimple $R$-module having at least two nonisomorphic simple $R$-submodules.
  \item {\rm (3)} If $V$ is a semisimple $R$-module all of whose simple $R$-submodules are isomorphic, then $\gamma (\cG (V))=|\End _R(S)|+1$ where $S$ is a simple $R$-submodule of $V.$
\end{description}
\end{thm}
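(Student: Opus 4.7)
The plan is to prove the three parts in order. Part (1) is equivalent to characterising modules admitting a proper essential submodule: a singleton $\{W\}$ dominates $\cG(V)$ if and only if $W$ is a proper essential submodule of $V$. If $V$ is semisimple and not simple, every proper submodule has a nonzero complement, so no proper essential submodule exists and $\gamma\geq 2$. Conversely, for non-semisimple $V$ I would exhibit a proper essential submodule via Zorn: for any submodule $W$, pick $W'\leq V$ maximal with $W\cap W'=0$, and verify that $W+W'$ is essential, because a nonzero $Z$ disjoint from $W+W'$ would enlarge $W'$ to $W'+Z$ while preserving disjointness with $W$, contradicting maximality. If $V$ had no proper essential submodule, then $W+W'=V$ for every $W$, making $V$ semisimple; the contrapositive gives $\gamma=1$ in the non-semisimple case.

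For part (2), assume $V$ is semisimple with two non-isomorphic simple submodules. Decompose $V=V_1\oplus V_2$ with $V_1$ one isotypical component and $V_2$ the sum of the rest; both are nonzero proper submodules. Since $V_1$ and $V_2$ share no isomorphism class of composition factor, Lemma \ref{4.1} forces the quintuple $(U_1,U_2,\theta,W_2,W_1)$ attached to any submodule of $V$ to satisfy $U_1=U_2$ and $W_1=W_2$, so every submodule is $A_1\oplus A_2$ with $A_j\leq V_j$. Consequently every nonzero proper submodule meets $V_1$ or $V_2$ nontrivially, so $\{V_1,V_2\}$ dominates and $\gamma\leq 2$; combined with part (1), $\gamma=2$. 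The converse is part (3): if all simple submodules are isomorphic to $S$, Schur's lemma makes $\End_R(S)$ a division ring, so $|\End_R(S)|\geq 2$ and $\gamma=|\End_R(S)|+1\geq 3$.

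For part (3), I first handle the base case $V=S\oplus S$: Lemma \ref{4.1} identifies the proper submodules as $S\oplus 0$, $0\oplus S$, and the graph submodules $\Delta_\theta=\{(s,\theta s):s\in S\}$ for $\theta\in\Aut_R(S)$, a total of $2+|\Aut_R(S)|=|\End_R(S)|+1$. Using that $\theta-\theta'$ is invertible for distinct $\theta,\theta'$ (Schur), any two of these submodules intersect trivially, so $\cG(V)$ is edgeless and the unique dominating set is the entire vertex set, giving $\gamma=|\End_R(S)|+1$.

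For the general isotypic case $V=\bigoplus_{i\in I}S_i$ with $|I|\geq 3$, decompose $V=V_0\oplus V_1$ with $V_0=S_1\oplus S_2$ and verify that $V_1$ is maximal with $V_1\cap V_0=0$. Lemma \ref{4.2}, applied with this $U=V_0$ and $\cA$ equal to the $|\End_R(S)|+1$ proper submodules of $V_0$ (which trivially dominate $\cG(V_0)$), yields a dominating set of $\cG(V)$ of the same size, so $\gamma\leq|\End_R(S)|+1$. For the lower bound, pass to a minimal dominating subset of a given finite $\cA$ (by iteratively discarding redundant vertices; the infinite-$\cA$ case yields the bound immediately when $|\End_R(S)|$ is finite) and apply Lemma \ref{4.3}: the resulting $U\in\cC$ is isomorphic to $S\oplus S$, so $\cG(U)$ is edgeless with $|\End_R(S)|+1$ vertices, all of which must appear as distinct nonzero intersections $T\cap U$ for $T\in\cA$, forcing $|\cA|\geq|\End_R(S)|+1$. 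The principal obstacle is the Goursat-and-Schur catalogue in the base case $V=S\oplus S$; the remaining assembly follows from the given lemmas together with the standard Zorn-type arguments.
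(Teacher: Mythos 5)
Your overall route is the same as the paper's: part (1) via the essential-submodule characterization of non-semisimplicity, part (2) via the isotypic decomposition $V=V_1\oplus V_2$ and Goursat (Lemma \ref{4.1}), and part (3) by counting the $|\End _R(S)|+1$ proper submodules of $S\oplus S$, transporting a dominating set upward with Lemma \ref{4.2} and downward with Lemma \ref{4.3}. Your explicit treatment of the base case $V\cong S\oplus S$ is a sensible addition, since Lemma \ref{4.3} formally requires $V$ to contain a direct sum of three simple modules.

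The one genuine gap is in part (3) and concerns the definition actually in force: the paper defines $\gamma(\cG)$ as the smallest cardinality of a \emph{minimal} dominating set, precisely because an infinite graph need not admit one. Your upper bound only produces a dominating set $\cB=\{T+W:T\in\cA\}$ of cardinality $|\End _R(S)|+1$; when $\End _R(S)$ is infinite you cannot simply extract a minimal dominating subset from $\cB$, so you must verify minimality directly. This is easy and is what the paper does: since $U\cap W=0$, each simple $T\in\cA$ is itself a vertex of $\cG(V)$ adjacent to no element of $\cB$ other than $T+W$, so no proper subset of $\cB$ dominates. Symmetrically, your lower-bound maneuver of ``passing to a minimal dominating subset of a given finite $\cA$'' is both unnecessary under the paper's definition (the set being bounded is already minimal, so Lemma \ref{4.3} applies directly) and insufficient under the standard one: for an infinite dominating set with $|\End _R(S)|$ infinite, neither the extraction nor your ``immediate'' bound is available. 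Once you commit to the paper's definition and add the minimality check for $\cB$, the argument closes.
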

\begin{proof} (1) This is trivial, because it is well known that an application of Zorn's lemma shows that: $V$ is not a semisimple module if and only if $V$ has a proper essential submodule (that is, a submodule intersecting every nonzero submodule of $V$ nontrivially). See, for instance, \cite[Corollary 5.9]{Warf}.

(2) Suppose that $V$ is a semisimple module having at least two nonisomorphic simple submodules, say $S_1$ and $S_2$. We here show that $\gamma (\cG (V))=2.$ The converse implication will follow by the virtue of parts (1) and (3), because Schur's lemma implies that $\End _R(S)$ is a division ring and so $|\End _R(S)|+1\geq 3.$

Let $V_1$ be the sum of all simple $R$-submodules of $V$ isomorphic to $S_1,$ and let $V_2$ be a complement of $V_1$ in $V$ so that $V=V_1\oplus V_2.$ By their constructions, $V_1$ and $V_2$ have no isomorphic sections. Thus, it follows from \ref{4.1} that the submodules of $V$ are of the form $X\oplus Y$ where $X\leq V_1$ and $Y\leq V_2.$ Now, it is obvious that the set $\{V_1,V_2\}$ forms a dominating set for $\cG (V).$ So, $\gamma (\cG (V))\leq 2,$ but it cannot be $1$ by part (1).

(3) Firstly, we show that $\cG (V)$ has a minimal dominating set of cardinality $|\End _R(S)|+1$ where $S$ is a simple submodule of $V.$ Take any submodule $U$ of $V$ that is a direct sum of two simple modules. Let $\cA $ be a dominating set for $\cG(U).$ As the proper submodules of $U$ are all simple, $\cA $ consists of all the proper submodules of $U.$ Using \ref{4.1} we see that the submodules of $S\times S$ are precisely
$$0\times 0,{\ }{\ }{\ }S\times 0,{\ }{\ }{\ }0\times S,{\ }{\ }{\ }S\times S,{\ }{\ }{\ }\{(s,\alpha (s)):s\in S\},$$
where $\alpha $ is ranging in the set of all the automorphisms of $S.$ As $U$ is isomorphic to $S\times S,$ we count the number of the proper submodules of $U$ as $|\Aut (S)|+2.$ Since $S$ is simple, Schur's lemma implies that $|\cA |=|\End _R(S)|+1.$ Since $V$ is semisimple, $U$ has a complement $W$ in $V.$ Now it follows from \ref{4.2} that the set $\cB =\{T+W:T\in \cA \}$ is a dominating set for $\cG(V).$ As $U\cap W=0,$ we see that the modules $T+W$ are all distinct where $T$ is ranging in $\cA ,$ so that $|\cB |=|\cA |.$ So far we have proved that $\cB $ is a dominating set for $\cG (V)$ of desired cardinality. To see that $\cB $ is a minimal dominating set, we simply observe that, for any $T\in \cA ,$ the only element of $\cB $ adjacent to $T$ is $T+W,$ because of $U\cap W=0.$

To finish the proof, we show that any minimal dominating set for $\cG(V)$ has cardinality greater than or equal to $|\End _R(S)|+1.$ Indeed, let $\cD $ be a minimal dominating set for $\cG (V).$ Using \ref{4.3} we deduce that $V$ has a submodule $U'$ isomorphic to $S\times S$ such that $U'$ has a dominating set of cardinality less than or equal to $|\cD |.$ As in the previous paragraph, since $U'$ is isomorphic to $S\times S,$ we see that any dominating set for $\cG (U')$ must contain every proper submodule of $U'$ and that $U'$ has $|\End _R(S)|+1$ proper submodules.
\end{proof}
The proof of the above result shows that the intersection graph of any module has a minimal dominating set.

\section{Finiteness}
In this section we investigate the modules $V$ for which $\cG (V)$ has finitely many vertices.
\begin{lem}
\label{5.1} Let $S$ be a simple $R$-module and $n$ be a natural number. Let $S_n$ denote the direct sum
$$\underbrace{S\times S\times \cdot \cdot \cdot \times S}_{n{\ }times },$$ which is a semisimple $R$-module of composition length $n.$ If $w _n$ denotes the number of maximal $R$-submodules of $S_n,$ then $\ds w _n=\frac{d^n-1}{d-1}$ where $d=|\End _R(S)|.$ In particular, for $n\geq 2,$ $w _n$ is finite if and only if $d$ is finite.
\end{lem}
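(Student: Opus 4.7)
The plan is to count the maximal $R$-submodules of $S_n$ as kernels of surjective $R$-homomorphisms $S_n \to S$, then identify those surjections that produce the same kernel. By Schur's lemma $D := \End_R(S)$ is a division ring, so its unit group $\Aut_R(S) = D^{\times}$ has $d-1$ elements when $d$ is finite. Moreover, since $S$ is simple, every nonzero element of $\Hom_R(U,S)$ (for any $R$-module $U$) is automatically surjective.

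Because $S_n$ is semisimple and isotypic of type $S$, every simple quotient of $S_n$ is isomorphic to $S$, so each maximal submodule of $S_n$ arises as the kernel of some surjection $\pi : S_n \to S$. By the universal property of the direct sum,
\[
\Hom_R(S_n, S) \;\cong\; \End_R(S)^n \;=\; D^n,
\]
so there are $d^n$ such homomorphisms, of which exactly $d^n - 1$ are nonzero (hence surjective). Two surjections $\pi, \pi' : S_n \to S$ share a kernel if and only if they differ by post-composition with an automorphism of $S$: both factor through the canonical projection $S_n \twoheadrightarrow S_n/\ker\pi$, and the two resulting isomorphisms to $S$ differ by an element of $\Aut_R(S) = D^{\times}$. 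Hence the fibres of the map $\pi \mapsto \ker\pi$ are the orbits of $D^{\times}$ acting by post-composition, each of size $d-1$, and therefore $w_n = (d^n - 1)/(d - 1)$.

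The finiteness claim for $n \geq 2$ is immediate when $d$ is finite. Conversely, if $d$ is infinite and $n \geq 2$, let $p_1, p_2 : S_n \to S$ be the first two coordinate projections. For distinct $\sigma, \sigma' \in D$ the surjections $p_1 + \sigma \circ p_2$ and $p_1 + \sigma' \circ p_2$ cannot differ by post-composition with a single $\tau \in D^{\times}$: evaluating on $(s,0,\dots,0)$ would force $\tau = \mathrm{id}_S$, and then evaluating on $(0,s,0,\dots,0)$ forces $\sigma = \sigma'$. So the associated kernels are pairwise distinct and $w_n \geq |D| = d$ is infinite. The only slightly subtle point is the orbit description of the fibres of $\pi \mapsto \ker\pi$; the rest is routine counting, and I expect no serious obstacle.
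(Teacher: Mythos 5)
Your proof is correct, but it takes a genuinely different route from the paper's. The paper works inside the Goursat-type description of submodules of a direct sum (its Lemma \ref{4.1}, quoted from Flannery): writing $S_{n+1}=S_n\times S$, it classifies which quintuples correspond to maximal submodules, derives the recurrence $w_{n+1}=1+dw_n$, and solves it. You instead count directly: every maximal submodule is the kernel of a surjection onto $S$, the set of nonzero elements of $\Hom_R(S_n,S)\cong \End_R(S)^n$ has $d^n-1$ elements, and $\Aut_R(S)=D^{\times}$ acts on it freely by post-composition with kernels constant on orbits and distinct across orbits, giving $(d^n-1)/(d-1)$ kernels. The one point you flag as subtle --- that equal kernels force the two surjections to differ by an automorphism of $S$ --- is handled correctly by factoring both through $S_n/\ker\pi$, and the freeness of the action (needed for every orbit to have size exactly $d-1$) is immediate from surjectivity of $\pi$. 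Your approach is essentially the classical ``hyperplanes of $D^n$ are points of the dual projective space'' count, and it makes transparent at once the coincidence with the vector-space formula that the paper only explains later (via Remark \ref{6.1} and Lemma \ref{6.2}); the paper's recurrence has the advantage of exercising the Goursat machinery it reuses throughout Section 5 for non-isotypic semisimple modules. Your separate treatment of the infinite-$d$ case via the kernels of $p_1+\sigma\circ p_2$ is also correct and is in fact slightly more careful than the paper, since the closed formula does not literally make sense as cardinal arithmetic when $d$ is infinite.
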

\begin{proof} We write $S_{n+1}=S_n\times S.$ It follows from \ref{4.1} that there is a bijection from the set of all $R$-submodules of $S_{n+1}$ to the set of all quintuples $(U_1,U_2,\theta ,W_2,W_1)$ where $U_2\leq U_1\leq S_n$ and $W_2\leq W_1\leq S$ and $\theta :U_1/U_2\rightarrow W_1/W_2$ is an isomorphism of $R$-modules. Since $S$ is simple, the quintuples are $$(U,U,1,0,0),{\ }{\ }{\ }(U,U,1,S,S),{\ }{\ }{\ }(U_1,U_2,\theta ,0,S)$$ where $U\leq S_n\geq U_1\geq U_2,$ and $1$ is the unique isomorphism between the zero modules $U/U,$ $0/0,$ and $S/S,$ and $\theta $ is an isomorphism from $U_1/U_2$ to $S/0,$ and $U_2$ is a maximal submodule of $U_1.$ Using \ref{4.1} again we now want to determine which quintuples correspond to the maximal submodules of $S_{n+1}.$ It is clear (especially from the isomorphisms of three modules in part (2) of) \ref{4.1} that the quintuples corresponding to the maximal submodules of $S_{n+1}$ are precisely
$$(S_n,S_n,1,0,0),{\ }{\ }{\ }(U,U,1,S,S),{\ }{\ }{\ }(S_n,U_2,\theta ,0,S)$$ where $U$ and $U_2$ are maximal submodule of $S_n,$ and $1$ and $\theta $ are isomorphisms as above. The number of quintuples of the form $(U,U,1,S,S)$ where $U$ is a maximal submodule of $S_n$ is $w_n.$ The number of quintuples of the form $(S_n,U_2,\theta ,0,S)$ where $U_2$ is a maximal submodule of $S_n$ and $\theta :S_n/U_2\rightarrow S/0$ is an isomorphisms is $|\Aut _R(S)|w_n$ because, for any maximal submodule $U_2$ of $S_n,$ the number of isomorphisms from $S_n/U_2$ to $S/0$ is equal to the number of automorphisms of $S.$ As $S$ is simple, Schur's lemma implies that $|\Aut _R(S)|=d-1.$ Consequently, the number of quintuples corresponding to the maximal submodules of $S_{n+1}$ is $1+w_n+(d-1)w_n.$ Hence, we obtain the recurrence relation $$w_{n+1}=1+dw_n,$$ which can be easily solved to obtain the result.
\end{proof}

Suppose that we have an $n$ dimensional vector space over a finite field with $d$ elements. One may count the number of its maximal subspaces more easily to be $\ds \frac{d^n-1}{d-1},$ the same number in \ref{5.1}. We see in the next section that this is not a coincidence.

\begin{pro}
\label{5.2} Let $V$ be a semisimple $R$-module isomorphic to $$n_1T_1\oplus n_2T_2\oplus \cdot \cdot \cdot \oplus n_rT_r$$ where $r$ is a natural number, $T_1,T_2,...,T_r$ are mutually nonisomorphic simple $R$-modules, and $n_1,n_2,...,n_r$ are natural numbers. Then, the number of maximal submodules of $V$ is
$$\sum _{i=1}^r\frac{d_i^{n_i}-1}{d_i-1},$$ where $d_i=|\End _R(T_i)|.$ In particular, the number of maximal submodules of $V$ is finite if and only if $d_k$ is finite for any $k\in \{1,2,...,r\}$ with $n_k\geq 2.$
\end{pro}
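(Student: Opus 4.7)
The plan is to reduce the counting to Lemma \ref{5.1} by decomposing $V$ into its isotypic components and showing that each maximal submodule comes from a maximal submodule of exactly one isotypic component.

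First I would show that every maximal submodule $M$ of $V$ contains all but one isotypic component. The quotient $V/M$ is a simple $R$-module, so $V/M\cong T_j$ for some $j\in\{1,\ldots,r\}$. For any $i\neq j$, every simple submodule of $n_iT_i$ is isomorphic to $T_i\not\cong T_j$, and since there are no nonzero $R$-module homomorphisms between non-isomorphic simple modules, the image of $n_iT_i$ in $V/M$ must be zero. Hence $n_iT_i\subseteq M$ for all $i\neq j$, so letting $W_j=\bigoplus_{i\neq j}n_iT_i$, we have $W_j\subseteq M$.

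Next I would set up a bijection. The assignment $M\mapsto M/W_j$ gives a bijection between the set of maximal submodules $M$ of $V$ with $V/M\cong T_j$ and the set of maximal submodules of $V/W_j\cong n_jT_j$, the inverse sending a maximal submodule $N$ of $n_jT_j$ to $N\oplus W_j$. Summing over $j$, the total number of maximal submodules of $V$ equals $\sum_{j=1}^{r}w_{n_j}$, where $w_{n_j}$ denotes the number of maximal submodules of $n_jT_j$. By Lemma \ref{5.1} applied to the simple module $T_j$ with $d_j=|\End_R(T_j)|$, this equals $\sum_{j=1}^r\frac{d_j^{n_j}-1}{d_j-1}$, yielding the claimed formula.

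Finally, for the finiteness criterion: each term $\frac{d_j^{n_j}-1}{d_j-1}=1+d_j+\cdots+d_j^{n_j-1}$ equals $1$ when $n_j=1$ irrespective of $d_j$, and when $n_j\geq 2$ it is finite if and only if $d_j$ is finite. Thus the total is finite if and only if $d_k$ is finite for every $k$ with $n_k\geq 2$. The main (and essentially only) obstacle is justifying the decomposition step cleanly; once one invokes Schur's lemma to kill cross-homomorphisms between non-isomorphic simples, the argument is immediate and no further appeal to Lemma \ref{4.1} is needed beyond its use inside Lemma \ref{5.1}.
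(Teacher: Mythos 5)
Your proof is correct. The overall skeleton is the same as the paper's (reduce to the isotypic components $n_jT_j$ and invoke Lemma \ref{5.1}), but the mechanism you use for the reduction is genuinely different. The paper writes $V\cong V_1\times\cdots\times V_r$ with $V_i=n_iT_i$ and appeals to Lemma \ref{4.1} (the Goursat-type description of submodules of a direct sum) to conclude that \emph{every} submodule is a product $A_1\times\cdots\times A_r$ with $A_i\leq V_i$ --- this rests on the observation that distinct isotypic components have no isomorphic nonzero sections --- and then reads off which products are maximal. You instead argue only about maximal submodules: $V/M$ is simple, hence isomorphic to some $T_j$, and Schur's lemma kills the composite $n_iT_i\hookrightarrow V\to V/M$ for $i\neq j$, so $M\supseteq W_j=\bigoplus_{i\neq j}n_iT_i$ and the correspondence theorem finishes the job. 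Your route is more elementary and self-contained for the purpose of counting maximal submodules (it avoids Lemma \ref{4.1} entirely at this step); the paper's route proves more --- the full product decomposition of the submodule lattice --- which it reuses later (e.g.\ in \ref{6.3} and \ref{6.4}), so the extra machinery is not wasted there. One small point you gloss over but which is standard: that $V/M$ is isomorphic to one of the $T_j$ follows from semisimplicity, since $M$ has a simple complement $S$ in $V$ with $S\cong V/M$, and every simple submodule of $V$ is isomorphic to some $T_j$.
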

\begin{proof} Let $W=V_1\times V_2\times \cdot \cdot \cdot \times V_r$ where $V_i=n_iT_i,$ so that $V\cong W.$ It follows from \ref{4.1} that the submodules of $W$ are of the form $A_1\times A_2\times \cdot \cdot \cdot \times A_r$ where $A_i\leq V_i$ for each $i.$ Moreover, it is clear that $A_1\times A_2\times \cdot \cdot \cdot \times A_r$ is a maximal submodule of $W$ if and only if there is an index $k$ such that $A_k$ is a maximal submodule of $V_k$ and $A_i=V_i$ for any $i$ different from $k.$ Therefore, the result follows from \ref{5.1}.
\end{proof}

In the following we characterize $R$-modules $V$ for which $\cG (V)$ has finitely many vertices.
\begin{thm}
\label{5.3} The number of $R$-submodules of $V$ is finite if and only if the following conditions hold:
\begin{description}
  \item {\rm (a)} $V$ has a (finite) composition series.
  \item {\rm (b)} If $V$ has a semisimple section in which the multiplicity of a simple $R$-module $S$ as a composition factor is greater than one, then $|\End _R(S)|$ is finite.
\end{description}
\end{thm}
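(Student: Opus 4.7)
The plan is to prove the equivalence in two directions; the bulk of the argument lies in the backward implication, which I would do by induction on the composition length of $V.$

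For the forward direction, assume $V$ has only finitely many $R$-submodules. Then every ascending and descending chain of submodules is finite, so $V$ is both Noetherian and Artinian, hence admits a composition series; this gives (a). For (b) I argue by contradiction: if $V$ has a semisimple section $Y/X$ in which a simple module $S$ has multiplicity at least $2$ but $|\End_R(S)|$ is infinite, then $Y/X$ contains a submodule isomorphic to $S\oplus S,$ say $Z/X$ with $X\leq Z\leq Y.$ By the correspondence theorem, the submodules of $V$ lying between $X$ and $Z$ biject with the submodules of $Z/X\cong S\oplus S.$ Enumerating the submodules of $S\times S$ via Lemma \ref{4.1} (exactly as is done inside the proof of \ref{4.4}) produces $|\Aut_R(S)|+4$ of them, which is infinite by Schur's lemma, contradicting the hypothesis on $V.$

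For the backward direction I induct on the composition length $n$ of $V,$ which is well defined by (a). The cases $n\leq 1$ are immediate. For $n\geq 2$ I fix a maximal $R$-submodule $W$ of $V,$ set $T=V/W$ (simple), and partition the submodules of $V$ into those contained in $W$ and those $X$ satisfying $X+W=V.$ Since $W$ has composition length $n-1$ and inherits both (a) and (b) (every section of $W$ is a section of $V$), the inductive hypothesis handles the first class. For the second class, each such $X$ is determined by the pair $(Y,X/Y),$ where $Y:=X\cap W$ is a submodule of $W$ and $X/Y$ is a complement of $W/Y$ in $V/Y;$ by the inductive hypothesis applied to $W,$ there are only finitely many possible $Y.$ For each fixed $Y$ the number of complements is either $0$ or equals $|\Hom_R(T,W/Y)|=|\End_R(T)|^{m_Y},$ where $m_Y$ denotes the multiplicity of $T$ in $\Soc(W/Y)$ (automatically finite). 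When $m_Y\geq 1$ and a complement exists, $V/Y\cong W/Y\oplus T'$ with $T'\cong T,$ so $\Soc(V/Y)$ contains $T$ with multiplicity $m_Y+1\geq 2;$ recognising $\Soc(V/Y)$ as a semisimple section of $V$ and invoking (b) forces $|\End_R(T)|$ to be finite, whence the complement count is finite.

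The main difficulty I anticipate is the bookkeeping in the second class: setting up the bijection $X\leftrightarrow(Y,X/Y)$ cleanly, then, whenever the count $|\End_R(T)|^{m_Y}$ threatens to be infinite, exhibiting a semisimple section of $V$ that triggers hypothesis (b) for precisely the simple module $T.$ Once this is in place, the total number of submodules of $V$ is a finite sum of finite contributions and the induction closes.
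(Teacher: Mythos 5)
Your proof is correct, but your backward direction is genuinely different from the paper's. For the ``only if'' half you both reduce to counting the submodules of $S\times S$ via Lemma \ref{4.1} (the paper packages this as Lemma \ref{5.1} on maximal submodules of $S_n$; your count $|\Aut_R(S)|+4$ is the same computation), so there is nothing to separate the two there. For the ``if'' half, however, the paper argues by contradiction and descent: assuming infinitely many submodules, it uses (b) together with Proposition \ref{5.2} to show $V/\Jac(V)$ has only finitely many maximal submodules, concludes that some maximal submodule $V_1$ must itself have infinitely many submodules, and iterates to manufacture an infinite descending chain $V>V_1>V_2>\cdots$ contradicting (a). You instead run a direct induction on composition length, splitting the submodules of $V$ over a fixed maximal submodule $W$ into those inside $W$ and those complementing $W/Y$ in $V/Y$ for the finitely many $Y\leq W$, and you bound the second class by $|\Hom_R(T,W/Y)|=|\End_R(T)|^{m_Y}$ --- a count the paper itself records only later, in the remark following Proposition \ref{6.4}. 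Your route is more quantitative (it yields an explicit recursive bound on the number of submodules, not just finiteness) at the cost of the extra bookkeeping needed to exhibit $\Soc(V/Y)$ as the semisimple section that triggers (b); the paper's descent argument is shorter because it never has to count the submodules not contained in $W$, only to locate one maximal submodule inheriting the infinitude. Both arguments are complete as far as I can see; the one point worth making explicit in yours is that when $m_Y=0$ the complement count is $1$ rather than $0$ (the zero homomorphism still corresponds to a complement), which does not affect finiteness.
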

\begin{proof} Suppose that the number of submodules of $V$ is finite. It is clear that $V$ can not contain an infinite ascending or descending chain of submodules so that $V$ is both Noetherian and Artinian, and hence it has a (finite) composition series. Suppose that there are submodules $X\leq Y$ of $V$ such that $Y/X$ is semisimple and that $Y/X$ has a submodule isomorphic to the direct sum $S\times S$ for some simple module $S.$ It follows from \ref{5.1} that the number of submodules of $Y/X$ is greater than $1+|\End _R(S)|.$ As the number of submodules of $V$ is greater than or equal to the number of submodules of $Y/X,$ it follows that $\End _R(S)$ is finite.

Conversely, suppose that $V$ is a module satisfying the conditions (a) and (b). Suppose for a moment that $V$ has infinitely many submodules. Because of $(a),$ every proper submodule of $V$ is contained in a maximal submodule of $V.$ Hence, either $V$ has infinitely many maximal submodules or there is a maximal submodule $V_1$ of $V$ such that $V_1$ has infinitely many submodules. The first possibility does not occur here, because the number of maximal submodules of $V$ is equal to the number of maximal submodules of the semisimple module $V/\Jac (V),$ which is finite because of (b) and \ref{5.2}. Therefore, $V_1$ has infinitely many submodules, and applying the same argument to $V_1$ we find a maximal submodule $V_2$ of $V_1$ such that $V_2$ has infinitely many submodules. Applying the same argument to $V_2$ we obtain $V_3.$ Continuing in this way we find an infinite descending chain $V>V_1>V_2>V_3>\cdot \cdot \cdot >\cdot \cdot \cdot .$ This contradicts with (a).
\end{proof}

\begin{rem}
\label{5.4} Suppose that $\cG (V)$ has an edge. Then, $\cG (V)$ has finitely many vertices if and only if $\cG (V)$ has finitely many edges.
\end{rem}
\begin{proof} Suppose that there are finitely many vertices. Since $\cG (V)$ has no multiple edges, it must have finitely many edges. Conversely, suppose that there are finitely many edges. As $\cG (V)$ has an edge, it follows form \ref{2.4} that $\cG (V)$ has no isolated vertices. As an edge determines two vertices, we see that $\cG (V)$ must have finitely many vertices.
\end{proof}

\section{Counting}
In this section we want to obtain more counting results about the submodules of a semisimple module.

We begin with an elementary observation. For the lack of an appropriate reference, we include a short justification.
\begin{rem}
\label{6.1}
\begin{description}
  \item[{\rm (1)}] Suppose that $V$ is a semisimple $R$-module having a (finite) composition series. Let $\cP $ be the poset of all the right ideals of $\End _R(V)$ and $\cQ $ be the poset of all the $R$-submodules of $V.$ For any right ideal $J$ of $\End _R(V)$ we let
      $$\phi _{*}(J)=\{\sum _{k=1}^mf_k(v_k):m\in \mathbb{N},f_k\in J,v_k\in V\},$$ the $R$-submodule of $V$ generated by the set $\{f(v):f\in J, v\in V\}.$ For any $R$-submodule $W$ of $V$ we let
      $$\phi ^{*}(W)=\{f\in \End _R(V):f(V)\subseteq W\}.$$ Then, the maps $\phi _{*}:\cP \rightarrow \cQ$ and $\phi ^{*}:\cQ \rightarrow \cP$ are mutually inverse poset isomorphisms.
  \item[{\rm (2)}] Let $A$ be a unital ring and $n$ be a natural number. Let $\M _n(A)$ be the ring of all the $n\times n$ matrices with entries in $A,$ and $\C _n(A)$ be the set of all the $n\times 1$ matrices with entries in $A.$ Identifying $A$ with $\M _1(A)$ we may regard $\C _n(A)$ as a right $A$-module with the module action given by matrix multiplication. Let $\cR $ be the poset of all the right ideals of $\M _n(A)$ and $\cT $ be the poset of all the right $A$-submodules of $\C _n(A).$ For any right ideal $I$ of $\M _n(A)$ we let
      $$\varphi _{*}(I)=\{Xe_{11}:X\in I\},$$ and for any right $A$-submodule $U$ of $\C _n(A)$ we let
      $$\varphi ^{*}(U)=\{X\in \M _n(A):Xe_{i1}\in U{\ }for{\ }any{\ }i=1,2,...,n\},$$ where $e_{i1}$ denote the $n\times 1$ matrix units in $\C _n(A)$ so that $Xe_{i1}$ is the $i$th column of $X.$  Then, the maps $\varphi _{*}:\cR \rightarrow \cT$ and $\varphi ^{*}:\cT \rightarrow \cR$ are mutually inverse poset isomorphisms.
\end{description}

\end{rem}
\begin{proof} (1) It is clear from their definitions that $J\leq \phi ^{*}\phi _{*}(J)$ and $\phi _{*}\phi ^{*}(W)\leq W$ for any right ideal $J$ of $\End _R(V)$ and any submodule $W$ of $V.$ We will observe that the reverse inclusions are also true, implying that $\phi _{*}$ and $\phi _{*}$ are mutually inverse bijections.

As $V$ is semisimple, there is a projection $\pi \in \End _R(V)$ onto $W.$ Then, $\pi \in \phi ^{*}(W)$ and so $W=\pi (V)\leq  \phi _{*}\phi ^{*}(W).$ Thus, $\phi _{*}\phi ^{*}$ is the identity.

As $V$ is a semisimple $R$-module having a (finite) composition series, $\End _R(V)$ is a direct sum of finitely many matrix rings over division rings so that it is a semisimple ring. Thus, a right ideal $J$ of $\End _R(V)$ must be principle generated by an idempotent $f_J\in \End _R(V),$ so that $J=f_J\End _R(V).$ We then see that $\phi _{*}(J)=f_J(V).$ Thus, if $f\in \phi ^{*}\phi _{*}(J)$ then $f(V)\subseteq \phi _{*}(J)=f_J(V).$ As $f_J$ is an idempotent, the last containment shows that $f_Jf=f,$ proving that $f\in J.$ Thus, $\phi ^{*}\phi _{*}$ is the identity.

Finally, it is obvious from their definitions that both of $\phi _{*}$ and $\phi ^{*}$ preserve orders. Hence, they must be poset isomorphisms.

(2) Straightforward.
\end{proof}

When $V$ is not a semisimple $R$-module, the maps in part (1) of \ref{6.1} are no longer bijections, however they form an order preserving Galois connection. See, for instance, \cite{Khu} and the references there.

For nonnegative integers $n\geq m$ and for a natural number $d\geq 2,$ we let
$$\binom{n}{m}_d=\prod _{k=1}^m\frac{d^n-d^{k-1}}{d^m-d^{k-1}}=\frac{(d^n-1)(d^n-d)\cdot \cdot \cdot (d^n-d^{m-1})}{(d^m-1)(d^m-d)\cdot \cdot \cdot (d^m-d^{m-1})},$$ which is called the $d$-ary Gaussian binomial coefficient.

\begin{lem}
\label{6.2} Let $S$ be a simple $R$-module and $n$ be a natural number. Let $S_n$ denote the direct sum
$$\underbrace{S\times S\times \cdot \cdot \cdot \times S}_{n{\ }times },$$ which is a semisimple $R$-module of composition length $n.$ For any nonnegative integer $i\leq n,$ let $\mu _i$ denote the number of $R$-submodules of $S_n$ of composition length $i.$ Let $j$ and $m$ be nonnegative integers less than or equal to $n.$ Then:
\begin{description}
  \item[{\rm (1)}] The poset of all the $R$-submodules of $S_n$ is isomorphic to the poset of all the subspaces of an $n$ dimensional vector space over the division ring $\End _R(S).$ Under this isomorphism an $R$-submodule of $S_n$ of composition length $i$ corresponds to an $i$ dimensional subspace of the vector space.
  \item[{\rm (2)}] $$\mu _i=\binom{n}{i}_d$$ where $d=|\End _R(S)|.$ In particular, for $n\geq 2,$ $\mu _i$ is finite if and only if $d$ is finite.
  \item[{\rm (3)}] Let $U$ be an $R$-submodule of $S_n$ of composition length $j.$ Then, the number of $R$-submodules $W$ of $S_n$ of composition length $i$ such that $U\cap W$ is of composition length $m$ is $$d^{(i-m)(j-m)}\binom{n-j}{i-m}_d\binom{j}{m}_d.$$
      In particular, the number of complements of $U$ in $W$ is $d^{(n-j)j}.$
\end{description}
\end{lem}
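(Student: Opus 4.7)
The plan is to use Remark \ref{6.1} to reduce all three parts to classical enumeration in the subspace lattice of a vector space over a division ring.

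For part (1), Schur's lemma gives that $D := \End _R(S)$ is a division ring, and there is a standard identification $\End _R(S_n) \cong \M _n(D)$. Applying Remark \ref{6.1}(1) to the semisimple module $S_n$, the poset of $R$-submodules of $S_n$ is poset-isomorphic to the poset of right ideals of $\M _n(D)$; Remark \ref{6.1}(2) then identifies this with the poset of right $D$-subspaces of $\C _n(D) \cong D^n$. Under the resulting composite isomorphism simple $R$-submodules correspond to minimal right ideals of $\M _n(D)$, hence to $1$-dimensional subspaces of $D^n$; since a poset isomorphism preserves chain lengths, an $R$-submodule of composition length $i$ corresponds to an $i$-dimensional subspace.

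For part (2), by (1) $\mu _i$ equals the number of $i$-dimensional subspaces of $D^n$. When $d = |D|$ is finite, this is the classical Gaussian binomial count $\binom{n}{i}_d$ (count ordered bases $(v_1, \ldots, v_i)$ of an $i$-dimensional subspace as $(d^n-1)(d^n-d)\cdots(d^n-d^{i-1})$, then divide by the number of ordered bases of $D^i$). When $d$ is infinite and $0 < i < n$, the count is already infinite, matching the finiteness equivalence stated in the lemma.

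For part (3), I again pass to $D^n$ via (1) and, for a fixed $j$-dimensional subspace $U$, count pairs $(L,W)$ with $L \leq U$ of dimension $m$ and $W$ of dimension $i$ satisfying $W \cap U = L$. The number of choices for $L$ is $\binom{j}{m}_d$ by (2). For each such $L$, quotienting by $L$ reduces the remaining problem to counting $(i-m)$-dimensional subspaces of $D^n/L$ (of dimension $n-m$) whose intersection with $U/L$ (of dimension $j-m$) is zero. The natural projection $D^n/L \to D^n/U$ sends any such $W/L$ injectively onto an $(i-m)$-dimensional subspace of $D^n/U$, giving $\binom{n-j}{i-m}_d$ choices for the image; the liftings to $D^n/L$ of a fixed image form a torsor under $\Hom _D$ from an $(i-m)$-dimensional space into $U/L$, contributing the factor $d^{(i-m)(j-m)}$. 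Multiplying the three factors yields the stated formula, and the complement count is the specialization $m = 0$, $i = n-j$. The main care is the chain-length bookkeeping in part (1) and the torsor argument for the liftings in part (3); once those are pinned down, the rest is direct computation.
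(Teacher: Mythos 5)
Your proposal is correct and follows essentially the same route as the paper: both reduce via Remark \ref{6.1} and the identification $\End_R(S_n)\cong \M_n(\End_R(S))$ to the subspace lattice of an $n$-dimensional vector space over the division ring $\End_R(S)$. The only difference is that where the paper simply cites Brouwer--Cohen--Neumaier (9.3.2 Lemma) for the subspace counts in parts (2) and (3), you supply the standard ordered-basis count and the correct fibration argument (choices of $L=W\cap U$, images in $D^n/U$, and the $\Hom_D$-torsor of liftings), which checks out.
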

\begin{proof} (1) Let $A=\End _R(S)$ which is a division ring by Schur's lemma. We use the notations in \ref{6.1}. Observe that $\cT $ may be identified with the poset of all the right $A$-submodules of $$\underbrace{A\times A\times \cdot \cdot \cdot \times A}_{n{\ }times},$$ an $n$ dimensional vector space over the division ring $A$ on which scalar acts from right. Moreover, as the ring $\End _R(S_n)$ is isomorphic to $\M _n(A),$ we see that the posets of all the right ideals of the rings $\End _R(S_n)$ and $\M _n(A)$ are isomorphic. The result now follows by the virtue of \ref{6.1}.

(2) By part (1), we see that $\mu _i$ is equal to the number of $i$ dimensional subspaces of an $n$-dimensional vector space over a division ring. The number of such subspaces is easy to find. See, for instance, page 269 of \cite[9.3.2. Lemma]{Bro}, implying the result.

(3) This again follows from part (1) and \cite[9.3.2. Lemma]{Bro}.
\end{proof}

The numbers in the last two parts of \ref{6.2} may also be calculated by using \ref{4.1}. However, reducing to the vector spaces simplifies calculations of these numbers. Moreover, using part (1) of \ref{6.2} and \ref{4.1}, one may generalize known results on the poset of subspaces of a vector space to results on the poset of submodules of a semisimple module. We here give some examples of this.

Part (1) of \ref{6.2} and \ref{4.1} implies

\begin{rem}
\label{6.3} Let $V$ be a semisimple $R$-module having a (finite) composition series. Then, the lattice of $R$-submodules of $V$ is isomorphic to a direct product of finitely many lattices of subspaces of finite dimensional vector spaces over division rings. More precisely, if $$V\cong n_1T_1\oplus n_2T_2\oplus \cdot \cdot \cdot \oplus n_rT_r,$$ where $r$ is a natural number, $T_1,T_2,...,T_r$ are mutually nonisomorphic simple $R$-modules, and $n_1,n_2,...,n_r$ are natural numbers, then we have the lattice isomorphism
$$\cL (V)\cong \cL ({U_1})\times \cL (U_2)\times \cdot \cdot \cdot \times \cL (U_r),$$ where $U_i$ is an $n_i$ dimensional vector space over the division ring $\End _R(T_i)$ on which scalars act from right, and $\cL (U_i)$ denotes the lattice of subspaces of $U_i,$ and $\cL (V)$ denotes the lattice of $R$-submodules of $V.$
\end{rem}

\begin{pro}
\label{6.4} Let $V$ be a semisimple $R$-module isomorphic to $$n_1T_1\oplus n_2T_2\oplus \cdot \cdot \cdot \oplus n_rT_r$$ where $r$ is a natural number, $T_1,T_2,...,T_r$ are mutually nonisomorphic simple $R$-modules, and $n_1,n_2,...,n_r$ are natural numbers.
\begin{description}
 \item[{\rm (1)}] For any nonnegative integer $i\leq n_1+n_2+\cdot \cdot \cdot +n_r,$ the number of $R$-submodules of $V$ of composition length $i$ is
     $$\sum _{(i_1,i_2,...,i_r)}\prod _{k=1}^r\binom{n_k}{i_k}_{d_k}$$ where the $r$-tuples $(i_1,i_2,...,i_r)$ are ranging in the set of all nonnegative integer solutions of the linear equation $x_1+x_2+\cdot \cdot \cdot +x_r=i$ satisfying $x_k\leq n_k$ for all $k,$ and where $d_k=|\End _R(T_k)|$ for all $k.$
 \item[{\rm (2)}] Let $U$ be an $R$-submodule of $V.$ Then, the number of complements of $U$ in $V$ is
 $$\prod _{i=1}^rd_k^{(n_k-m_k)m_k}$$ where, for each $k,$ $d_k=|\End _R(T_k)|,$ and $m_k$ is the multiplicity of $T_k$ as a composition factor of $U,$ so that $U$ is isomorphic to $m_1T_1\oplus m_2T_2\oplus \cdot \cdot \cdot \oplus m_rT_r.$
\end{description}
\end{pro}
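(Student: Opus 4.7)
The plan is to reduce both counting problems to counting in vector spaces by invoking Remark \ref{6.3}. That remark provides a lattice isomorphism
$$\cL(V)\cong \cL(U_1)\times \cdots \times \cL(U_r),$$
where $U_k$ is an $n_k$-dimensional vector space over the division ring $\End_R(T_k)$. Under this isomorphism a submodule $W$ of $V$ corresponds uniquely to a tuple $(W_1,\ldots,W_r)$ with $W_k\leq U_k$, and because the isomorphism respects meets, joins, and the bottom element, it sends complements to componentwise complements.

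For part (1), I would first check that under the lattice isomorphism, the composition length of a submodule corresponding to $(W_1,\ldots,W_r)$ equals $\dim W_1 + \cdots + \dim W_r$. This is immediate once one notes that, within the decomposition $V\cong n_1T_1\oplus\cdots\oplus n_rT_r$, the submodule corresponding to $(W_1,\ldots,W_r)$ is of the form $X_1\oplus\cdots\oplus X_r$ with $X_k\leq n_kT_k$ having composition length $\dim W_k$ by Lemma \ref{6.2}(1). Once this is in hand, the number of submodules of composition length $i$ is obtained by summing, over all tuples $(i_1,\ldots,i_r)$ of nonnegative integers with $i_k\leq n_k$ and $i_1+\cdots+i_r=i$, the number of ways to choose an $i_k$-dimensional subspace in each $U_k$. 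By Lemma \ref{6.2}(2), the $k$th factor contributes $\binom{n_k}{i_k}_{d_k}$, and multiplying and summing gives the desired formula.

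For part (2), let $U$ correspond to $(U'_1,\ldots,U'_r)$ under the lattice isomorphism. Since composition factors are invariants of the module and the decomposition $V\cong\bigoplus n_kT_k$ is unique, the multiplicity $m_k$ of $T_k$ as a composition factor of $U$ must equal $\dim U'_k$. A complement of $U$ in $V$ is a submodule $W$ with $U\cap W=0$ and $U+W=V$; under the lattice isomorphism these two conditions translate componentwise to $U'_k\cap W_k=0$ and $U'_k+W_k=U_k$ in each $U_k$. Hence complements of $U$ in $V$ correspond bijectively to tuples of complements of $U'_k$ in $U_k$. By Lemma \ref{6.2}(3), applied with $n=n_k$ and $j=m_k$, the number of complements of $U'_k$ in $U_k$ is $d_k^{(n_k-m_k)m_k}$, and multiplying over $k$ yields the stated product.

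The main point requiring care, rather than an obstacle, is the componentwise translation of lattice-theoretic properties: the identifications "composition length $=$ sum of dimensions" and "complement $\iff$ componentwise complement" both rely on the fact that the isomorphism in Remark \ref{6.3} preserves the lattice operations and the zero and top elements. Once these are observed, both formulas drop out by combining Lemma \ref{6.2}(2) and Lemma \ref{6.2}(3) with the product decomposition of $\cL(V)$.
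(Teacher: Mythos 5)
Your proposal is correct and follows essentially the same route as the paper: the paper reduces to the isotypic decomposition $V\cong V_1\times\cdots\times V_r$ with $V_k=n_kT_k$, invokes Lemma \ref{4.1} to see that submodules are componentwise products, and then cites Lemma \ref{6.2} for the per-block counts, which is exactly what your appeal to Remark \ref{6.3} (itself a packaging of \ref{6.2}(1) and \ref{4.1}) accomplishes. Your version merely makes explicit the details the paper leaves implicit, namely that composition length is the sum of the component dimensions and that complements correspond to componentwise complements.
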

\begin{proof} Let $W=V_1\times V_2\times \cdot \cdot \cdot \times V_r$ where $V_i=n_iT_i,$ so that $V\cong W.$ It follows from \ref{4.1} that the submodules of $W$ are of the form $A_1\times A_2\times \cdot \cdot \cdot \times A_r$ where $A_i\leq V_i$ for each $i.$ The result follows from \ref{6.2}.
\end{proof}

The number in part (2) of \ref{6.4} is equal to the cardinality of the set $\Hom _R(V/U,U).$ Indeed, a more general result is true. Suppose $W$ is an $R$-module which is not necessarily semisimple and $A$ be an $R$-submodule of $W.$ If $A$ has a complement in $W,$ then it is easy to see by applying \ref{4.1} that there is a bijection between the set of complements of $A$ in $W$ and the hom set $\Hom _R(W/A,A).$

We have the following obvious consequence of \ref{6.4}.
\begin{cor}
\label{6.5} Let $V$ be a semisimple $R$-module of composition length $n$ where $n$ is a natural number. For any nonnegative integer $i\leq n,$ let $\mu _i$ be the number of $R$-submodules of $V$ of composition length $i.$ Let $U$ be an $R$-submodule of $V.$ Then:
\begin{description}
 \item[{\rm (1)}] For any nonnegative integer $i\leq n,$ the numbers $\mu _i$ and $\mu _{n-i}$ are both infinite or they are equal finite numbers.
 \item[{\rm (2)}] For any complement $W$ of $U$ in $V,$ the number of complements of $U$ in $V$ and the number of complements of $W$ in $V$ are both infinite or they are equal finite numbers.
 \item[{\rm (3)}] For any two isomorphic $R$-submodules $X$ and $Y$ of $V,$ the number of complements of $X$ in $V$ and the number of complements of $Y$ in $V$ are both infinite or they are equal finite numbers.
\end{description}
\end{cor}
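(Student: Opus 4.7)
The plan is to derive each of the three parts directly from the explicit formulas in Proposition \ref{6.4}, together with the uniqueness of the isotypic decomposition of the semisimple module $V$ (Krull--Schmidt). Write $V\cong n_1T_1\oplus n_2T_2\oplus \cdots \oplus n_rT_r$ with the $T_k$ mutually nonisomorphic simple $R$-modules, set $d_k=|\End _R(T_k)|,$ and note $n=n_1+n_2+\cdots +n_r.$ Every $R$-submodule of $V$ is then determined up to isomorphism by its multiplicity tuple $(m_1,\dots ,m_r)$ with $m_k\leq n_k,$ and the combinatorial quantities of interest are read off directly from these tuples via \ref{6.4}.

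For part (1), I would apply \ref{6.4}(1) to both $\mu _i$ and $\mu _{n-i}.$ The map $(i_1,\dots ,i_r)\mapsto (n_1-i_1,\dots ,n_r-i_r)$ is a bijection from the set of nonnegative integer solutions of $x_1+\cdots +x_r=i$ with $x_k\leq n_k$ onto the analogous set for $n-i.$ Combined with the standard symmetry $\binom{n_k}{i_k}_{d_k}=\binom{n_k}{n_k-i_k}_{d_k}$ of the Gaussian binomial coefficients, which is either immediate from the defining ratio of products or follows from the subspace interpretation in \ref{6.2}(1) by pairing a subspace with a complement, the corresponding summands match termwise. Hence $\mu _i$ and $\mu _{n-i}$ are simultaneously finite or infinite, and are equal in the finite case.

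For parts (2) and (3), I would use \ref{6.4}(2). If $U$ has multiplicity tuple $(m_1,\dots ,m_r)$ and $W$ is any complement of $U$ in $V,$ then from $V=U\oplus W$ and Krull--Schmidt the tuple of $W$ is $(n_1-m_1,\dots ,n_r-m_r).$ Applying \ref{6.4}(2) to each gives $\prod _kd_k^{(n_k-m_k)m_k}$ complements of $U$ and $\prod _kd_k^{m_k(n_k-m_k)}$ complements of $W,$ and these products agree termwise, which is (2). For (3), if $X\cong Y$ then the two submodules have the same multiplicity tuple by the uniqueness of the isotypic decomposition, and \ref{6.4}(2) returns the same value on each, yielding the claimed equivalence of finiteness and the equality in the finite case.

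There is essentially no substantial obstacle: once the formulas of \ref{6.4} are in hand, each assertion reduces to a symmetry check of the relevant index set or exponent product, or to an appeal to Krull--Schmidt. The only point where one must be slightly careful is (1), where the symmetry $\binom{n}{m}_d=\binom{n}{n-m}_d$ of the Gaussian binomial coefficients should be invoked explicitly, since the claim is sharper than simply matching cardinalities of index sets.
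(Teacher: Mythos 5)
Your proposal is correct and follows exactly the route the paper intends: the paper gives no written proof, presenting \ref{6.5} as an ``obvious consequence'' of \ref{6.4}, and your derivation — termwise matching of the sum in \ref{6.4}(1) via the index bijection $(i_1,\dots,i_r)\mapsto(n_1-i_1,\dots,n_r-i_r)$ and the symmetry of the Gaussian binomial coefficients, plus reading off \ref{6.4}(2) on the multiplicity tuples determined by Krull--Schmidt — is precisely the computation being left to the reader. No gaps.
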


Let $k$ be a natural number and $L$ be a finite modular lattice. It is shown in \cite{Dil} that the number of elements of $L$ covered by precisely $k$ elements is equal to the number of elements of $L$ covering precisely $k$ elements. The submodule lattice of (any module) $V$ is modular, and if we assume further that $V$ is semisimple, then, for $k=1,$ this result shows that the number of simple submodules of $V$ is equal to the number of maximal submodules of $V,$ i.e., $\mu _1=\mu _{n-1}$ with the notations of \ref{6.5}. On the other hand, let $D$ be a finite modular complemented lattice. By the length of an element $x$ in $D$ we mean the length of the largest chain between the smallest element of $D$ and $x.$ Let the length of the greatest element of $D$ be $n.$ Then, \ref{6.5} prompts us to ask: Is it true that the number of elements of $D$ of length $k$ is equal to the number of elements of $D$ of length $n-k$? An affirmative answer implies part (1) of \ref{6.5}. One may ask a similar question for $D$ suggested by part (2) of \ref{6.5}.

\begin{rem}{\rm \cite{Cla}}
\label{6.6} Let $U$ be an $n$ dimensional vector space over a finite field $\mathbb{F}$ with $q$ elements. Let $\cS (U)$ denote the set of all subspaces of $U,$ and let $\cS _i(U)$ denote the set of all $i$ dimensional subspaces of $U$ where $0\leq i\leq n.$ Then:
\begin{description}
 \item[{\rm (1)}] There is a bijection $\phi :\cS (U)\rightarrow \cS (U)$ such that $X\cap \phi (X)=0$ and $X+\phi(X)=U$ for any $X\in \cS (U).$ In particular, for any $j$ with $0\leq j\leq n,$ the restriction of $\phi $ to $\cS _j(U)$ gives a bijection $\phi _j:\cS _j(U)\rightarrow \cS _{n-j}(U).$ Moreover, if $n$ is odd, then $\phi \circ \phi $ is the identity on $\cS (U).$ More to the point, if $n=2k$ is even and $q$ is odd, then $\phi _{k}\circ \phi _{k}$ is the identity on $\cS _k(U).$
 \item[{\rm (2)}] Suppose that $n=2k$ is even and $q$ is odd. Then, we may write $\cS _k(U)=\cA \cup \cB $ for some disjoint sets $\cA $ and $\cB $ satisfying the condition: there is a bijection $\alpha :\cA \rightarrow \cB $ such that $X\cap \phi (X)=0$ for any $X\in \cA.$
 \item[{\rm (3)}] Suppose that $n=2k$ is even and $q>2$ is even. Then, there is a $W\in \cS _k(U)$ such that we may write $\cS _k(U)-\{W\}=\cA \cup \cB $ for some disjoint sets $\cA $ and $\cB $ satisfying the condition: there is a bijection $\alpha :\cA \rightarrow \cB $ such that $X\cap \phi (X)=0$ for any $X\in \cA.$
\end{description}
\end{rem}

By using part (1) of \ref{6.2} and \ref{4.1} we lift the previous result to semisimple modules as follows, and we will use it in the next section where we study colorings.

\begin{lem}
\label{6.7} Let $V$ be a semisimple $R$-module isomorphic to $$n_1T_1\oplus n_2T_2\oplus \cdot \cdot \cdot \oplus n_rT_r$$ where $r$ is a natural number, $T_1,T_2,...,T_r$ are mutually nonisomorphic simple $R$-modules, and $n_1,n_2,...,n_r$ are natural numbers. Assume that, for any natural number $i$ with $1\leq i\leq r,$ if $n_i\geq 2$ then $d_i=|\End _R(T_i)|$ is finite. Let $\cS (V)$ denote the set of all $R$-submodules of $V,$ and let $\cS _j(V)$ denote the set of all $R$-submodules of $V$ of composition length $j$ where $j$ is a nonnegative integer. Let $n=n_1+n_2+\cdot \cdot \cdot n_r.$ Then:
\begin{description}
 \item[{\rm (1)}] There is a bijection $\phi :\cS (V)\rightarrow \cS (V)$ such that $X\cap \phi (X)=0$ and $X+\phi(X)=V$ for any $X\in \cS (V).$ In particular, for any $j$ with $0\leq j\leq n,$ the restriction of $\phi $ to $\cS _j(V)$ gives a bijection $\phi _j:\cS _j(V)\rightarrow \cS _{n-j}(V).$
 \item[{\rm (2)}] Suppose that $n=2k$ is even and at least one of $n_1,n_2,...,n_r$ is odd. Then, we may write $\cS _k(V)=\cA \cup \cB $ for some disjoint sets $\cA $ and $\cB $ satisfying the condition: there is a bijection $\alpha :\cA \rightarrow \cB $ such that $X\cap \phi (X)=0$ for any $X\in \cA.$
 \item[{\rm (3)}] Suppose that $n=2k$ is even and all of $n_1,n_2,...,n_r$ are even. If at least one of $d_1,d_2,...,d_r$ is odd, then the conclusion of part (2) is still true for $\cS _k(V).$
\end{description}
\end{lem}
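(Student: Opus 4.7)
My plan is to reduce the statement to a product of vector-space lattices via Remark~\ref{6.3} and then apply Remark~\ref{6.6} componentwise. Using that isomorphism, identify $\cS(V)$ with $\prod_{i=1}^r\cS(U_i)$, where $U_i$ is an $n_i$-dimensional vector space over the division ring $\End_R(T_i)$ of cardinality $d_i$; the hypothesis forces $d_i<\infty$ whenever $n_i\geq 2$, so Remark~\ref{6.6} applies to each nontrivial $U_i$. Under this identification, intersections and sums are componentwise, and the composition length of a submodule equals the sum of the dimensions of its components.

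For part (1), for each $i$ I take $\phi_i:\cS(U_i)\to\cS(U_i)$ with the complement property: from Remark~\ref{6.6}(1) when $n_i\geq 2$, and as the swap $0\leftrightarrow U_i$ when $n_i=1$. Setting $\phi(X_1,\dots,X_r)=(\phi_1(X_1),\dots,\phi_r(X_r))$ produces a bijection on $\cS(V)$ with $X\cap\phi(X)=0$ and $X+\phi(X)=V$; since $\dim\phi_i(X_i)=n_i-\dim X_i$, $\phi$ restricts to bijections $\phi_j:\cS_j(V)\to\cS_{n-j}(V)$.

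For parts (2) and (3), the goal is to refine the construction on the middle layer $\cS_k(V)$ so that $\phi$ becomes a fixed-point-free involution there; then $\cA$ can be taken as any transversal for the $2$-cycles of $\phi|_{\cS_k(V)}$, with $\cB=\phi(\cA)$ and $\alpha=\phi|_{\cA}$. Decompose $\cS_k(V)$ into the level sets $\cS^{(j_1,\dots,j_r)}=\prod_i\cS_{j_i}(U_i)$ with $\sum j_i=k$, and pair each tuple $(j_1,\dots,j_r)$ with $(n_1-j_1,\dots,n_r-j_r)$; these tuples coincide iff $j_i=n_i/2$ for every $i$. In case (2), some $n_i$ is odd (WLOG $n_1$), so no tuple is self-paired; on each proper pair of level sets I set $\phi$ to be a product of complement-preserving bijections from Remark~\ref{6.6}(1) in one direction and the inverse in the other, producing the required involution.

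The main obstacle is case (3), where the diagonal tuple $\vec m=(n_1/2,\dots,n_r/2)$ is self-paired and I must build a fixed-point-free, complement-preserving involution directly on $\cS^{\vec m}=\prod_i\cS_{n_i/2}(U_i)$. WLOG $d_1$ is odd; Remark~\ref{6.6}(1) then yields an involution $\tau_1$ on $\cS_{n_1/2}(U_1)$ with the complement property, automatically fixed-point-free. Partition $\cS_{n_1/2}(U_1)=A_1^+\sqcup A_1^-$ with $\tau_1(A_1^+)=A_1^-$ by choosing one element per $2$-cycle. For $i\geq 2$ let $\pi_i$ be the (possibly non-involutive) complement-preserving bijection on $\cS_{n_i/2}(U_i)$ from Remark~\ref{6.6}(1), and define
$$
\sigma(X_1,\dots,X_r)=\begin{cases}
(\tau_1(X_1),\pi_2(X_2),\dots,\pi_r(X_r)) & \text{if } X_1\in A_1^+,\\
(\tau_1(X_1),\pi_2^{-1}(X_2),\dots,\pi_r^{-1}(X_r)) & \text{if } X_1\in A_1^-.
\end{cases}
$$
The odd-order $d_1$ supplies the parity that synchronises the non-involutive $\pi_i$'s: applying $\sigma$ twice returns $X_1$ via $\tau_1^2=\mathrm{id}$ while cancelling each $\pi_i$ against its inverse, so $\sigma^2=\mathrm{id}$; componentwise $X_i\cap\pi_i(X_i)=0$ and $X_i\cap\pi_i^{-1}(X_i)=0$ (the latter by applying the complement property to $\pi_i^{-1}(X_i)$), so $\sigma$ has the complement property; and fixed-point-freeness follows from $\tau_1(X_1)\neq X_1$. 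Combining $\sigma$ on $\cS^{\vec m}$ with the pair-matching on the remaining level sets completes the construction.
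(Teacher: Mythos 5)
Your proof is correct and follows the same overall strategy as the paper: identify $\cS(V)$ with the product $\prod_i\cS(U_i)$ of subspace lattices via Lemma \ref{4.1} and part (1) of \ref{6.2}, apply Remark \ref{6.6} componentwise for part (1), and for the middle layer decompose $\cS_k(V)$ into level sets and pair off the non-self-paired ones. The only cosmetic difference in part (2) is that the paper splits $V$ binarily as $V_1\times V'$ (with $n_1$ odd) and pairs $\cS_i(V_1)\times\cS_{k-i}(V')$ with $\cS_{n_1-i}(V_1)\times\cS_{k-n_1+i}(V')$, whereas you use the full $r$-fold product and pair the tuples $(j_1,\dots,j_r)\leftrightarrow(n_1-j_1,\dots,n_r-j_r)$; both amount to the same observation that oddness of some $n_i$ rules out self-paired level sets. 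Where your write-up genuinely adds value is part (3): the paper dismisses it as ``similar to part (2),'' but the diagonal level set $\prod_i\cS_{n_i/2}(U_i)$ \emph{is} self-paired there, so something extra is needed. Your construction of the fixed-point-free involution $\sigma$ --- using the odd $d_1$ to get a genuine involution $\tau_1$ on $\cS_{n_1/2}(U_1)$ from \ref{6.6}(1) and then toggling between $\pi_i$ and $\pi_i^{-1}$ on the remaining coordinates according to which side of the $\tau_1$-transversal $X_1$ lies --- is a correct and complete way to supply that missing step, and the verification that $\sigma^2=\mathrm{id}$, that $\sigma$ is complement-avoiding (using that $\pi_i^{-1}$ inherits the property $X_i\cap\pi_i^{-1}(X_i)=0$), and that $\tau_1(X_1)\neq X_1$ forces fixed-point-freeness, all check out.
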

\begin{proof} Let $\cS ({\ })$ denote the set of all submodules of its argument, and let $\cS _j({\ })$ denote the set of all submodules of its argument of composition length $j.$

(1) Let $V_i=n_iT_i.$ From part (1) of \ref{6.2} and \ref{6.6}, there is a bijection $\varphi _i:\cS (V_i)\rightarrow \cS (V_i)$ such that $X\cap \varphi _i(X)=0$ and $X+\varphi _i(X)=V_i$ for any $X\in \cS (V_i).$ Let $W=V_1\times V_2\times \cdot \cdot \cdot \times V_r$ so that $V\cong W.$ It follows from \ref{4.1} that
$$\cS (W)=\cS (V_1)\times \cS (V_2)\times \cdot \cdot \cdot \times \cS (V_r).$$ Then, $\varphi _1\times \varphi _2\times \cdot \cdot \cdot \times \varphi _r:\cS (W)\rightarrow \cS (W)$ is a bijection satisfying the required property for all $X\in \cS (W).$ The result follows because $V$ and $W$ are isomorphic modules.

(2) Without loss of generality assume that $n_1$ is odd. Write $W=V_1\times V'$ where $V_1=n_1T_1$ and $V'=n_2T_2\oplus \cdot \cdot \cdot \oplus n_rT_r$ so that $W\cong V.$ From \ref{4.1},
$$\cS _k(W)=\biguplus _i\cS _i(V_1)\times \cS _{k-i}(V'),$$ where $i$ is ranging in nonnegative integers such that $0\leq i\leq n_1$ and $0\leq k-i\leq 2k-n_1,$ (so that $\max(n_1-k,0)\leq i\leq \min(n_1,k_1)$), and where $\uplus$ denotes the disjoint union. To simplify the notations let $\cP _i=\cS _i(V_1)\times \cS _{k-i}(V').$ We have two possibilities: $k\geq n_1$ or $k<n_1.$ Assume first that $k\geq n_1$ so that $0\leq i\leq n_1.$ Letting $n_1=2s-1$ for some natural number $s,$ we now define
$$\cA= \cP _0\uplus \cP _1\uplus \cdot \cdot \cdot \uplus \cP _{s-1},{\ }{\ }{\ }\cB =\cP _{2s-1}\uplus \cP _{2s-2}\uplus \cdot \cdot \cdot \uplus \cP _s.$$ Thus, $\cS _k(W)=\cA \uplus \cB .$ Let $0\leq m\leq s-1$ be a natural number. It follows from part (1) that there are bijections $$\eta :\cS _m(V_1)\rightarrow \cS _{n_1-m}(V_1)$$ such that $X\cap \eta (X)=0$ and $X+\eta (X)=V_1$ for any $X\in \cS _m(V_1),$ and $$\theta :\cS _{k-m}(V')\rightarrow \cS _{(n-n_1)-(k-m)}(V')=\cS _{k-(n_1-m)}(V')$$ such that $Y\cap \theta (Y)=0$ and $Y+\theta (Y)=V'$ for any $Y\in \cS _{k-m}(V').$ Then, $$\eta \uplus \theta :\cP _m\rightarrow  \cP _{n_1-m}$$ is a bijection satisfying the required condition. The collection of the maps $\eta \uplus \theta $ for each $m$ gives a bijection $\cA \rightarrow \cB $ satisfying the required condition. The result follows because the posets $\cS _k(W)$ and $\cS _k(V)$ are isomorphic. The case in which $k<n_1$ may be handled similarly.

(3) Similar to the proof of part (2).
\end{proof}

\section{Coloring}
In this section we study the chromatic number of $\cG (V).$ The chromatic number of $\cG (V)$ is already studied in \cite{Jaf2} in the special case where $R$ is a finite field and $V$ is an odd dimensional vector space over $R.$ Therefore, some of our results generalize these special results.

Let $\cG $ be a graph. The chromatic number of $\cG $ is defined to be the smallest number of colors $\chi (\cG )$ needed to color the vertices of $\cG $ so that no two adjacent vertices share the same color. Whenever we mention about to color a graph or a coloring of a graph we assume that no two adjacent vertices share the same color. See, for instance, \cite{West} for finite graphs, and \cite{Ore} for infinite graphs. The smallest number of colors in the above definition may not make sense, when the graph has infinitely many vertices and it cannot be colored by finitely many colors.

We first understand when $\cG (V)$ can be colored by finitely many colors.

\begin{pro}
\label{7.1} $\cG (V)$ can be colored by finitely many colors if and only if the following conditions hold:
\begin{description}
 \item[{\rm (a)}] $V$ has a (finite) composition series.
 \item[{\rm (b)}] There is a natural number $n$ such that, for any simple $R$-submodule of $S$ of $V,$ the number of $R$-submodules of $V/S$ is less than or equal to $n.$
 \item[{\rm (c)}] $\cG (\Soc (V))$ can be colored by finitely many colors.
\end{description}
Moreover, in this case, let $\widetilde{\cG }$ be the subgraph of $\cG (\Soc (V))$ induced by the vertices $\Soc (X)$ where $X$ is ranging in the set $$\{X\leq V:X\not \subseteq \Soc (V){\ }{\ }and{\ }{\ }\Soc (V)\not \subseteq X\},$$ then
 $$\chi (\cG (V))\leq \chi (\cG (\Soc (V)))+(n-2)\chi (\widetilde{\cG })+m-1,$$ where
$m$ is the number of $R$-submodules of $V/\Soc (V).$
\end{pro}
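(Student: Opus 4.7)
The plan is to establish the three conditions as necessary via clique and subgraph obstructions, and to establish sufficiency by partitioning the vertices of $\cG(V)$ according to their position relative to $\Soc(V)$ and coloring each class with a disjoint palette.

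For necessity, suppose $\cG(V)$ is finitely colorable. If (a) fails, $V$ contains an infinite chain of nonzero proper submodules; this chain is pairwise intersecting, hence an infinite clique in $\cG(V)$, a contradiction. If (b) fails, some simple $S\leq V$ makes $V/S$ have infinitely many submodules, and the correspondence theorem turns these into an infinite family of proper submodules of $V$ all containing $S$, yielding another infinite clique. Condition (c) is necessary because the induced subgraph of $\cG(V)$ on the proper $R$-submodules of $\Soc(V)$ coincides with $\cG(\Soc(V))$ (or with $\cG(V)$ itself when $\Soc(V)=V$), so $\chi(\cG(\Soc(V)))\leq\chi(\cG(V))$.

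For sufficiency I would partition the proper submodules of $V$ into Class I, those strictly contained in $\Soc(V)$; Class II, those containing $\Soc(V)$; and Class III, those with $X\not\subseteq\Soc(V)$ and $\Soc(V)\not\subseteq X$. The key identity $\Soc(X)=X\cap\Soc(V)$ (valid since $\Soc(V)$ is semisimple) combined with condition (a), which implies $\Soc(X\cap Y)\neq 0$ whenever $X\cap Y\neq 0$, gives the crucial adjacency-to-socle implication $X\cap Y\neq 0\Rightarrow\Soc(X)\cap\Soc(Y)\neq 0$. Class I is colored by an optimal coloring of $\cG(\Soc(V))$ using $\chi(\cG(\Soc(V)))$ colors. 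Class II is a clique, since all its members contain $\Soc(V)\neq 0$, and by the correspondence theorem it has exactly $m-1$ members, so it needs $m-1$ fresh colors. For Class III I would fix an optimal coloring $c$ of $\widetilde{\cG}$ and, for each $Z\in\widetilde{\cG}$, an injective labeling $\ell_Z$ on the fiber $\{X\in\text{Class III}:\Soc(X)=Z\}$ with values in $\{1,\dots,n-2\}$, then color $X$ by the pair $(c(\Soc(X)),\ell_{\Soc(X)}(X))$, consuming $(n-2)\chi(\widetilde{\cG})$ further colors.

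The hardest step is bounding each fiber by $n-2$. Given $Z\in\widetilde{\cG}$, pick a simple submodule $S$ of $Z$; every $X$ with $\Soc(X)=Z$ contains $S$, so $X$ is among the at most $n$ submodules of $V$ containing $S$ (by the correspondence with submodules of $V/S$ together with (b)). Excluding $V$ (not a vertex) and $\Soc(V)$ (which belongs to Class II, not III) leaves at most $n-2$ candidates, giving the fiber bound. The resulting coloring is valid: adjacent Class III vertices with equal socles are separated by $\ell$, while those with distinct socles have adjacent socles in $\widetilde{\cG}$ and are separated by $c$; adjacencies between different classes are handled automatically since the three palettes are disjoint. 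Summing the sizes gives the claimed bound, and finiteness of the right-hand side follows because (c) forces $\chi(\cG(\Soc(V)))<\infty$ and hence $\chi(\widetilde{\cG})<\infty$, while (b) forces $m\leq n$ upon applying it to any simple $S\leq\Soc(V)$.
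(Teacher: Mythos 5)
Your proof is correct and follows essentially the same route as the paper: the same three necessity obstructions (infinite chains, cliques of submodules over a fixed simple, induced subgraph on $\Soc(V)$), the same three-class partition relative to $\Soc(V)$, the same fiber bound $n-2$ via submodules containing a simple $S\leq\Soc(X)$ after discarding $V$ and $\Soc(V)$, and the same product coloring over $\widetilde{\cG}$. The only slip is in the necessity of (b): its negation gives, for each $n$, \emph{some} simple $S$ with more than $n$ submodules in $V/S$ (hence arbitrarily large cliques), not necessarily a single $S$ with infinitely many; this is a one-line repair and does not affect the argument.
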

\begin{proof} Suppose that $\cG (V)$ can be colored by finitely many colors. Part (c) is obvious, because $\cG (\Soc (V))$ is an induced subgraph of $\cG (V)$ meaning that, for any two vertices of $\cG (\Soc (V)),$ they are adjacent in $\cG (\Soc (V))$ if and only if they are adjacent in $\cG (V).$

It is clear that $V$ cannot have an infinite ascending or descending chain of submodules, because, otherwise, the proper submodules appearing in such a chain will be mutually adjacent vertices of the graph $\cG (V).$ Hence, (a) follows.

Part (b) is trivial. Otherwise, for any natural number $n,$ there is a simple module $S$ such that $V/S$ has at least $n+1$ submodules, implying that there are at least $n$ proper submodules of $V$ containing $S.$ These $n$ submodules are mutually adjacent vertices of $\cG (V)$ so that $\chi (\cG (V))\geq n-1$ for any $n.$

For the rest we assume that (a)-(c) hold. In particular, $V$ has a (finite) composition series, and so $\Soc (U)=\Soc (V)\cap U\neq 0$ for any nonzero submodule $U$ of $V.$

Let $\cP (V)$ be the set of proper submodules of $V.$ We define three sets:
$$\cA =\{X\in \cP (V):\Soc (V)\subseteq X\},{\ }{\ }{\ }{\ }\cB =\{X\in \cP (V):X\subseteq \Soc (V),X\neq \Soc (V)\}$$
$$\cC =\cP (V)-(\cA \cup\cB ),$$ so that $\cP (V)$ is the disjoint union $\cA \uplus \cB \uplus \cC.$

We now want to show that $\chi (\cG (V))$ has the desired upper bound. Firstly, as $X\cap Y\neq 0$ for any elements $X$ and $Y$ of $\cA ,$ we need to use at least $|\cA |$ colors in any coloring of $\cG (V).$ Moreover, as any element of $\cB $ is adjacent to every element of $\cA ,$ to color the vertices in $\cB $ we cannot use the colors that are already used to color the vertices in $\cA .$ Thus, we need at least $|\cA |+\chi (\cG (\Soc (V)))$ colors to color the vertices in $\cA \cup \cB ,$ and we can color the vertices in $\cA \cup \cB $ by using $|\cA |+\chi (\cG (\Soc (V)))$ colors, where, of course, $|\cA |=m-1.$

We will show that the vertices in $\cC $ can be colored by using $(n-2)\chi (\widetilde{\cG })$ colors which proves the desired upper bound for $\chi (\cG (V)).$

For any elements $Y_1$ and $Y_2$ of $\cC ,$ it is clear that $\Soc (Y_i)\in \cB ,$ and we see that $Y_1\cap Y_2\neq 0$ if and only if $\Soc (Y_1)\cap \Soc (Y_2)\neq 0$ (because, $Y_1\cap Y_2\neq 0$ implies $(Y_1\cap Y_2)\cap \Soc (V)\neq 0$). That is to say, $Y_1$ and $Y_2$ are adjacent vertices of $\cG (V)$ if and only if either $\Soc (Y_1)$ and $\Soc (Y_2)$ are equal or they are adjacent vertices of $\cG (\Soc (V)).$

Define a relation $\sim $ on $\cC $ as follows: $Y_1\sim Y_2$  if and only if $\Soc (Y_1)=\Soc (Y_2).$ Then, it is obvious that $\sim $ is an equivalence relation whose equivalence classes are precisely the sets
$$\cC _U=\{Z\in \cC :\Soc (Z)=U\}$$
where $U$ is ranging in the vertex set of $\widetilde{\cG }.$ Moreover, $\cC $ is the disjoint union of equivalence classes. Since any element in an equivalence class $\cC _U$ contains $U,$ part (b) implies that $|\cC _U|\leq n-2.$ For the same reason, the elements in a single equivalence class $\cC _U$ are mutually adjacent vertices in $\cG (V),$ an so we can color $\cC _U$ by using $n-2$ colors. Furthermore, it follows from the previous paragraph that, for any two distinct equivalence classes $\cC _U$ and $\cC _{U'},$ there is an element $Z\in \cC _U$ and an element $Z'\in \cC _{U'}$ such that $Z\cap Z'\neq 0$ if and only if $U$ and $U'$ are adjacent vertices of $\widetilde{\cG }.$ Consequently, we may color the vertices in $\cC $ by using $(n-2)\chi (\widetilde{\cG })$ colors.

Finally, we want to observe that if (a)-(c) hold then $\chi (\cG (V))$ is finite. Indeed, if (a)-(c) hold, then we have just proved that $\chi (\cG (V))$ has the mentioned upper bound in this result. As $\chi (\widetilde{\cG })\leq \chi (\cG (\Soc (V))),$ (b) and (c) imply that this upper bound is a finite number.
\end{proof}

For lower bounds we state.

\begin{rem}
\label{7.2} Assume that $\cG (V)$ can be colored by finitely many colors. Then, for any proper $R$-submodule $W$ of $V,$ $\cG (W)$ can be colored by finitely many colors, and
$$\chi (\cG (W))+m-1\leq \chi (\cG (V)),$$ where $m$ is the number of $R$-submodules of $V/W.$
\end{rem}
\begin{proof} Let $\cP (V)$ be the set of proper submodules of $V.$ Take any $W\in \cP (V).$ It is clear that $\cG (W)$ can be colored by finitely many colors, because $\cG (W)$ is an induced subgraph of $\cG (V).$ Moreover, part (a) of \ref{7.1} implies that there is a simple submodule of $W,$ and so part (b) of \ref{7.1} implies that the number of submodules of $V/W$ is finite. Define the sets
$$\cA =\{X\in \cP (V):W\subseteq X\},{\ }{\ }{\ }{\ }\cB =\{X\in \cP (V):X\subseteq W,X\neq W\}.$$
Imitating the relevant part of the proof of \ref{7.1} we easily see that we need at least $|\cA |+\chi (\cG (W))$ colors to color the vertices of $\cG (V)$ in $\cA \uplus \cB,$ finishing the proof.
\end{proof}

By using \ref{5.3} and \ref{7.1} we can find examples of modules $V$ such that $\cG (V)$ can be colored by finitely many colors and it has infinitely many vertices and edges. For instance, an $R$-module $V$ satisfying $\Soc (V)\cong S\oplus S$ and $V/\Soc (V)\cong T$ where $S$ and $T$ are nonisomorphic simple $R$-modules and $|\End _R(S)|$ is infinite.

We now specialize in semisimple modules.

\begin{cor}
\label{7.3} Let $V$ be a semisimple $R$-module isomorphic to $$n_1T_1\oplus n_2T_2\oplus \cdot \cdot \cdot \oplus n_rT_r$$ where $r$ is a natural number, $T_1,T_2,...,T_r$ are mutually nonisomorphic simple $R$-modules, and $n_1,n_2,...,n_r$ are natural numbers.
\begin{description}
 \item[{\rm (1)}] Let $r=1,$ so that $V\cong n_1T_1.$ Then, $\cG (V)$ can be colored by finitely many colors if and only if  either $n_1\leq 2,$ or $n_1\geq 3$ and $\End _R(T_1)$ is finite.
 \item[{\rm (2)}] Let $r\geq 2.$ Then, the following conditions are equivalent:
   \begin{description}
 \item[{\rm (i)}] $\cG (V)$ can be colored by finitely many colors.
 \item[{\rm (ii)}] $\cG (V)$ has finitely many vertices.
 \item[{\rm (iii)}] For each $i\in \{1,2,...,r\},$ if $n_i\geq 2$ then $|\End _R(T_i)|$ is finite.
\end{description}
\end{description}
\end{cor}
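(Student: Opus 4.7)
The plan is to reduce every case to the finiteness counts for semisimple modules already established in \ref{5.1}, \ref{5.2}, and \ref{5.3}, together with the elementary observation that any collection of proper $R$-submodules of $V$ sharing a common nonzero submodule spans a clique in $\cG(V).$ Thus exhibiting an infinite such collection forces $\chi(\cG(V))$ to be infinite, and this is how the ``only if'' directions will be handled.

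For part (1), I split on $n_1.$ If $n_1=1,$ then $V$ is simple and $\cG(V)$ has no vertices. If $n_1=2,$ every proper submodule of $V$ has composition length $1$ and so is simple, hence any two distinct proper submodules meet in $0,$ $\cG(V)$ has no edges, and one color suffices. Now assume $n_1\geq 3.$ If $|\End_R(T_1)|$ is finite, then \ref{5.3} applies: $V$ has a composition series of length $n_1,$ and the only simple composition factor appearing with multiplicity $\geq 2$ is $T_1,$ whose endomorphism ring is finite; hence $V$ has finitely many submodules and $\cG(V)$ is finite. Conversely, if $|\End_R(T_1)|$ is infinite, fix any simple submodule $S\leq V.$ Then $V/S\cong (n_1-1)T_1$ has composition length at least $2,$ so by \ref{5.1} it has infinitely many maximal (hence infinitely many proper) submodules. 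Pulling back through the order-preserving bijection between submodules of $V/S$ and submodules of $V$ containing $S$ yields infinitely many proper submodules of $V$ all containing $S,$ an infinite clique in $\cG(V).$

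For part (2), I prove the cycle (ii)$\Rightarrow$(i)$\Rightarrow$(iii)$\Rightarrow$(ii). The implication (ii)$\Rightarrow$(i) is immediate since a finite graph is finitely colorable. For (iii)$\Rightarrow$(ii), I invoke \ref{5.3}: condition (a) is automatic because $V$ has composition length $n_1+\cdots+n_r,$ and condition (b) asks that whenever a simple module appears in a semisimple section of $V$ with multiplicity $\geq 2,$ its endomorphism ring must be finite. Every section of $V$ is semisimple with composition factors among $T_1,\ldots,T_r,$ and $V$ itself is a section realizing multiplicity exactly $n_i$ for each $i,$ so condition (b) is precisely (iii); hence $V$ has finitely many submodules.

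The substantive step is (i)$\Rightarrow$(iii), which I argue by contrapositive. Suppose some $n_i\geq 2$ satisfies $|\End_R(T_i)|=\infty,$ and relabel so that $i=1.$ Using $r\geq 2,$ choose a simple submodule $S\leq V_2=n_2T_2.$ Since $S\cap V_1=0,$ the module $V_1$ embeds as a direct summand of $V/S$ (a complement being the image of $(V_2/S)\oplus V_3\oplus\cdots\oplus V_r$). By \ref{5.1}, $V_1=n_1T_1$ has infinitely many submodules, so $V/S$ does too; the correspondence then yields infinitely many proper submodules of $V$ all containing $S,$ producing an infinite clique in $\cG(V).$ The delicate point, and the main obstacle to be aware of, is the choice of $S$: picking $S$ from $V_1$ itself would destroy part of the ``bad'' $T_1$-isotypic component and $V/S$ could end up with only finitely many submodules, so one must use the extra isotypic component supplied by the hypothesis $r\geq 2.$
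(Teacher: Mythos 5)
Your proposal is correct and follows essentially the same route as the paper: reduce everything to the submodule counts of \ref{5.1}--\ref{5.3}, and for the ``only if'' directions exhibit infinitely many proper submodules over a fixed simple submodule $S$ (chosen in a second isotypic component when $r\geq 2$), which form an infinite clique. The paper packages that clique argument as part (b) of \ref{7.1} and cites it, whereas you inline it directly; the mathematics is the same.
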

\begin{proof} (1) If $n_1\leq 2,$ then any nonzero submodule of $V$ is simple so that $\cG (V)$ has no edges. So, in this case $\cG (V)$ can be colored by a single color.

Assume now that $n_1\geq 3.$ Suppose that $\cG (V)$ can be colored by finitely many colors. Part (b) of \ref{7.1} implies that $(n_1-1)T_1$ has finitely many submodules. Then, as $n_1-1\geq 2,$ it follows from \ref{5.3} that $|\End _R(T_1)|$ is finite.

Conversely, if $n_1\geq 3$ and $|\End _R(T_1)|$ is finite, then \ref{5.3} implies that $\cG (V)$ has finitely many vertices.

(2) (ii) $\iff $ (iii): Follows from \ref{5.3}.

(ii) $\implies $ (i): Obvious.

(i) $\implies $ (ii): Follows from part (b) of \ref{7.1} and \ref{5.3}.
\end{proof}

Suppose that $V$ is a semisimple $R$-module having a (finite) composition series and that $V$ is not isomorphic to $S\oplus S$ for any simple $R$-module $S.$ We see in the above result that $\cG (V)$ can be colored by finitely many vertices if and only if $\cG (V)$ has finitely many vertices.

The results \ref{7.1} and \ref{7.3} complete the description of the structure of $V$ for which $\cG (V)$ can be colored by finitely many colors.

\begin{lem}
\label{7.4} Let $V$ be an $R$-module having a composition series of length $n$ where $n$ is a natural number. For any nonnegative integer $i\leq n,$ let $\cS _i(V)$ denote the set of all $R$-submodules of $V$ of composition length $i.$ Let $k$ be a natural number. Then:
\begin{description}
 \item[{\rm (1)}] Let $n=2k-1$ be odd. Then, $X\cap Y\neq 0$ for any elements $X$ and $Y$ of the set $\ds \biguplus _{i=k}^n\cS _i(V).$ In particular, to color $\cG (V)$ we need to use at least $\ds \sum _{i=k}^{n-1}|\cS _i(V)|$ colors.
 \item[{\rm (2)}] Let $n=2k$ be even. Then, $X\cap Y\neq 0$ for any elements $X$ and $Y$ of the set $\ds \biguplus _{i=k+1}^n\cS _i(V).$ Moreover, given any $U\in \cS _k(V),$ then $U\cap W\neq 0$ for any $W$ in $\ds \biguplus _{i=k+1}^n\cS _i(V).$ In particular, $\ds \sum _{i=k+1}^{n-1}|\cS _i(V)|$ colors are not enough to color $\cG (V).$
\end{description}
\end{lem}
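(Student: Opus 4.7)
The plan is to derive both parts from a single inequality on composition lengths. Since $V$ has a composition series, its submodule lattice is modular and of finite length, so for any two $R$-submodules $X$ and $Y$ of $V$ the Jordan-Hölder theorem yields
\[
\ell(X) + \ell(Y) \;=\; \ell(X+Y) + \ell(X\cap Y),
\]
where $\ell(\cdot)$ denotes composition length. Combined with $\ell(X+Y) \leq \ell(V) = n$, this gives the one bound the whole proof rests on:
\[
\ell(X\cap Y) \;\geq\; \ell(X) + \ell(Y) - n.
\]

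For part (1), with $n=2k-1$, any $X,Y \in \biguplus_{i=k}^{n}\cS_i(V)$ have $\ell(X),\ell(Y)\geq k$, hence $\ell(X\cap Y)\geq 2k-(2k-1)=1$, so $X\cap Y\neq 0$. Restricting to $k\leq i\leq n-1$, all submodules in $\cS_i(V)$ are proper and thus are vertices of $\cG(V)$; being pairwise adjacent, they form a clique of size $\sum_{i=k}^{n-1}|\cS_i(V)|$, which lower-bounds $\chi(\cG(V))$.

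For part (2), with $n=2k$, any $X,Y \in \biguplus_{i=k+1}^{n}\cS_i(V)$ have $\ell(X),\ell(Y)\geq k+1$, hence $\ell(X\cap Y)\geq 2(k+1)-2k=2>0$. For $U\in\cS_k(V)$ and $W\in\cS_j(V)$ with $j\geq k+1$, the same bound gives $\ell(U\cap W)\geq k+(k+1)-2k=1$, so $U\cap W\neq 0$. Any composition series of $V$ passes through a submodule of length $k$, so $\cS_k(V)\neq\emptyset$; adjoining any $U\in\cS_k(V)$ to the proper submodules of lengths $k+1,\ldots,n-1$ yields a clique of size $1+\sum_{i=k+1}^{n-1}|\cS_i(V)|$, showing that $\sum_{i=k+1}^{n-1}|\cS_i(V)|$ colors cannot suffice.

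There is essentially no substantive obstacle here: the entire argument is two applications of the modular length identity, and the rest is bookkeeping about which values of $i$ correspond to proper submodules (hence to vertices of $\cG(V)$). The only mildly delicate point is verifying that $\cS_k(V)\neq\emptyset$ in part (2), which is immediate from the existence of a composition series of length $n=2k$.
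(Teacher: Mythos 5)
Your proof is correct and follows essentially the same route as the paper, which simply observes that $\ell(A)+\ell(B)>\ell(V)$ forces $A\cap B\neq 0$ and leaves the rest as bookkeeping. You merely make explicit the length identity $\ell(X)+\ell(Y)=\ell(X+Y)+\ell(X\cap Y)$ underlying that observation, and your clique-counting for the chromatic bounds matches the intended argument.
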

\begin{proof} Let $\ell ({\ })$ denote the composition length of its argument. This is obvious, because, for any two submodules $A$ and $B$ of $V,$ if $\ell (A)+\ell (B)>\ell (V)$ then $A\cap B\neq 0.$ (If it is not obvious, see, for instance, \cite[Chapter 4]{Warf}).
\end{proof}

We finally proceed to obtain the chromatic number of a semisimple module. We first need a simple observation.

\begin{lem}
\label{7.5} Let $V$ be a semisimple $R$-module having a composition series of length $2k$ where $k$ is a natural number. Suppose that there is a simple $R$-module whose multiplicity as a composition factor of $V$ is odd. Then, given any three $R$-submodules $W_1,W_2,W_3$ of $V$ of composition length $k,$  at least one of the three intersection
$$W_1\cap W_2,{\ }{\ }{\ }W_1\cap W_3,{\ }{\ }{\ }W_2\cap W_3$$ is nonzero.
\end{lem}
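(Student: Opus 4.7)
The plan is to argue by contradiction: assume that the three intersections $W_1\cap W_2$, $W_1\cap W_3$, $W_2\cap W_3$ are all zero and derive that every simple $R$-module must appear with even multiplicity as a composition factor of $V$, contradicting the hypothesis.

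First I would observe that if $W_i\cap W_j=0$ then, since $V$ is semisimple (so composition length is additive on direct sums) and $\ell(W_i)+\ell(W_j)=k+k=2k=\ell(V)$, the submodule $W_i+W_j$ has composition length $2k$, hence equals $V$. Thus, under the contradiction hypothesis, each of the three equalities
\[
V=W_1\oplus W_2,\qquad V=W_1\oplus W_3,\qquad V=W_2\oplus W_3
\]
holds simultaneously. In particular each pair in $\{W_1,W_2,W_3\}$ consists of mutual complements, so $W_2\cong V/W_1\cong W_3$ and, more generally, all three have pairwise isomorphic complements.

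Next I would count multiplicities. Write $V\cong n_1T_1\oplus\cdots\oplus n_rT_r$ with the $T_j$ pairwise non-isomorphic simple modules, and for each $i\in\{1,2,3\}$ let $W_i\cong m_1^{(i)}T_1\oplus\cdots\oplus m_r^{(i)}T_r$ (such a decomposition exists since $W_i$ is a submodule of the semisimple module $V$ and hence itself semisimple with a finite composition series). By the Krull--Schmidt uniqueness for semisimple modules of finite length applied to the three direct sum decompositions above, for every index $j$,
\[
m_j^{(1)}+m_j^{(2)}=n_j,\quad m_j^{(1)}+m_j^{(3)}=n_j,\quad m_j^{(2)}+m_j^{(3)}=n_j.
\]
Subtracting the first two gives $m_j^{(2)}=m_j^{(3)}$; substituting into the third yields $2m_j^{(2)}=n_j$. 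Therefore every $n_j$ is even.

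This contradicts the hypothesis that some simple module occurs with odd multiplicity as a composition factor of $V$, so at least one of the three intersections must be nonzero. The only mildly delicate point is the Krull--Schmidt step, which here is really just the uniqueness of multiplicities in a semisimple decomposition of finite length; it is standard but is the technical heart of the argument.
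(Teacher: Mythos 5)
Your proof is correct and follows essentially the same route as the paper: assume all three pairwise intersections vanish, use the length count to conclude that each pair gives a direct sum decomposition $V=W_i\oplus W_j$, and deduce that every composition multiplicity of $V$ is even. The paper packages the final step as a chain of isomorphisms yielding $W_1\cong V/W_1$, while you solve the three multiplicity equations directly; these are the same argument in different bookkeeping.
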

\begin{proof} Otherwise, there are submodules $W_1,W_2,W_3$ of $V$ of composition length $k$ such that $W_i\cap W_j=0,$ and hence $W_i\oplus W_j=V,$ for any $i,j$ with $i\neq j.$ Now, we have the following isomorphisms of modules:
$$W_1\cong (W_1\oplus W_2)/W_2=(W_2\oplus W_3)/W_2\cong W_3\cong (W_1\oplus W_3)/W_1=V/W_1.$$ But then, $W_1\cong V/W_1$ implies that the multiplicity of any simple module as a composition factor of $V$ is even.
\end{proof}

\begin{pro}
\label{7.6} Let $V$ be a semisimple $R$-module isomorphic to $$n_1T_1\oplus n_2T_2\oplus \cdot \cdot \cdot \oplus n_rT_r$$ where $r$ is a natural number, $T_1,T_2,...,T_r$ are mutually nonisomorphic simple $R$-modules, and $n_1,n_2,...,n_r$ are natural numbers. Assume that, for any natural number $i$ with $1\leq i\leq r,$ if $n_i\geq 2$ then $d_i=|\End _R(T_i)|$ is finite. Let $\cS _j(V)$ denote the set of all $R$-submodules of $V$ of composition length $j$ where $j$ is a nonnegative integer. Let $n=n_1+n_2+\cdot \cdot \cdot +n_r,$ and let $k$ be a natural number. Suppose that $n>2.$ Then:
\begin{description}
 \item[{\rm (1)}] $\cG (V)$ can be colored by finitely many colors.
 \item[{\rm (2)}] If $n=2k-1$ is odd, then
 $$\chi (\cG (V))= \sum _{i=k}^{n-1}|\cS _i(V)|.$$
 \item[{\rm (3)}] If $n=2k$ is even and at least one of $n_1,n_2,...,n_r$ is odd, then
 $$\chi (\cG (V))= \frac{1}{2}|\cS _k(V)|+\sum _{i=k+1}^{n-1}|\cS _i(V)|.$$
\end{description}
\end{pro}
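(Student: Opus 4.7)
My plan is to obtain (1) from Corollary \ref{7.3}, and to prove the chromatic formulas in (2) and (3) by matching lower and upper bounds: the lower bounds will come from Lemmas \ref{7.4} and \ref{7.5}, while the upper bounds will be built from the disjointness bijection $\phi$ supplied by Lemma \ref{6.7}.

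For (1), observe that $n>2$ forces either $r=1$ with $n_1=n\geq 3$ --- in which case $|\End_R(T_1)|$ is finite by hypothesis and Corollary \ref{7.3}(1) gives that $\cG(V)$ has finitely many vertices --- or $r\geq 2,$ where Corollary \ref{7.3}(2) again yields finitely many vertices under our hypothesis. In either situation $\chi(\cG(V))$ is finite, and Proposition \ref{6.4} shows that each $|\cS_i(V)|$ appearing in the formulas is a finite natural number.

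For the lower bounds I exhibit large cliques. In case (2), with $n=2k-1,$ Lemma \ref{7.4}(1) gives that $\bigcup_{i=k}^{n-1}\cS_i(V)$ is a clique in $\cG(V),$ whence $\chi(\cG(V))\geq\sum_{i=k}^{n-1}|\cS_i(V)|.$ In case (3), with $n=2k,$ Lemma \ref{7.4}(2) gives that $\bigcup_{i=k+1}^{n-1}\cS_i(V)$ is a clique, every element of which additionally intersects every $U\in\cS_k(V)$ nontrivially; thus no color appearing on $\cS_k(V)$ can appear on that clique. Lemma \ref{7.5} --- applicable because some $n_i$ is odd --- forces every independent set in $\cS_k(V)$ to have cardinality at most two, so at least $|\cS_k(V)|/2$ extra colors are needed for $\cS_k(V),$ and this halving is integral since Lemma \ref{6.7}(2) splits $\cS_k(V)$ into two equal parts.

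For the upper bounds I build the claimed colorings from Lemma \ref{6.7}(1), which provides a bijection $\phi:\cS(V)\to\cS(V)$ restricting to bijections $\phi_j:\cS_j(V)\to\cS_{n-j}(V)$ with $X\cap\phi(X)=0$ for every proper $X.$ In case (2), give each $Y$ of composition length $\geq k$ a distinct color and color each $X$ of composition length $\leq k-1$ with the color of $\phi(X);$ the color classes are the disjoint pairs $\{Y,\phi^{-1}(Y)\}$ of non-adjacent submodules, and the coloring uses exactly $\sum_{i=k}^{n-1}|\cS_i(V)|$ colors. In case (3), do the analogous pairing for levels $j\leq k-1$ with $n-j\geq k+1,$ and handle the middle by invoking the decomposition $\cS_k(V)=\cA\uplus\cB$ of Lemma \ref{6.7}(2), in which the restriction of $\phi$ to $\cA$ realises a bijection $\cA\to\cB$ with $X\cap\phi(X)=0;$ assign each pair $\{X,\phi(X)\}$ with $X\in\cA$ a shared new color. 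This uses $\sum_{i=k+1}^{n-1}|\cS_i(V)|+\tfrac12|\cS_k(V)|$ colors, matching the lower bound.

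The main delicate point is checking that the constructed coloring is proper, i.e., that submodules sharing a color really do have trivial intersection. For every color class produced above this follows at once from the defining property $X\cap\phi(X)=0$ in Lemma \ref{6.7}, together with the bijectivity of $\phi.$ The genuinely subtle step is the middle level $\cS_k(V)$ in part (3), where one needs the decomposition $\cA\uplus\cB$ to be compatible with $\phi;$ it is exactly the odd-$n_i$ hypothesis that lets Lemma \ref{6.7}(2) supply this decomposition and, simultaneously, lets Lemma \ref{7.5} supply the matching lower bound on the middle level.
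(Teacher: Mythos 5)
Your proposal is correct and follows essentially the same route as the paper: part (1) from Corollary \ref{7.3}, lower bounds from the cliques of Lemma \ref{7.4} together with Lemma \ref{7.5} for the middle level, and upper bounds by pairing each low-level submodule with a complementary high-level one via the bijections of Lemma \ref{6.7}, with the $\cA\uplus\cB$ splitting handling $\cS_k(V)$ in the even case. The only (harmless) divergence is that you describe the pairing on $\cS_k(V)$ as the restriction of $\phi$ itself rather than the separate bijection $\alpha$ that Lemma \ref{6.7}(2) provides; the argument is unaffected.
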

\begin{proof} (1) Follows from \ref{7.3}.

(2) Let us define $\cE $ and $\cF $ as
$$\cE =\biguplus_{i=k}^{n-1}\cS _i(V),{\ }{\ }{\ }\cF =\biguplus_{i=1}^{k-1}\cS _i(V).$$ Note that the set of vertices in $\cG (V)$ is the disjoint union $\cE \uplus \cF .$ It follows from part (1) of \ref{7.4} that the vertices in $\cE $ are mutually adjacent. So, in any coloring of $\cG (V),$ the vertices in $\cE $ must be colored by using mutually different colors. We want to show that the vertices in $\cF $ can be colored by using the colors that are used to color the vertices in $\cE .$ This will imply that the chromatic number of $\cG $ is $|\cE |,$ which finishes the proof.

From part (1) of \ref{6.7}, for any nonnegative integer $j\leq n,$ there is a bijection $$\phi _j:\cS _j(V)\rightarrow \cS _{n-j}(V)$$ such that $X\cap \phi _j(X)=0$ for any $X\in \cS _j(V).$ Then, these maps induce the bijection
$$\varphi =\biguplus_{i=k}^{n-1}\phi _i :\cE \rightarrow \cF $$ such that $Y\cap \varphi (Y)=0$ for any $Y$ in $\cE .$ Hence, for any vertex $Z$ in $\cF ,$ we may color $Z$ by using the same color of the vertex $\varphi ^{-1}(Z)$ in $\cE ,$ where $\varphi ^{-1}$ denotes the inverse of $\varphi .$

Suppose we have colored the vertices as we explained. We want to observe that no two adjacent vertices are colored by the same color. Take any two vertices $Z_1$ and $Z_2$ of $\cG (V)$ whose colors are the same. It is clear that exactly one of $Z_1$ and $Z_2$ must be in $\cE ,$ say $Z_1\in \cE $ and $Z_2\in \cF .$ Then, $\varphi ^{-1}(Z_2)$ and $Z_1$ are both in $\cE $ and their colors are the same. As the vertices in $\cE $ are colored by using mutually different colors, $\varphi ^{-1}(Z_2)=Z_1,$ implying that $Z_1\cap Z_2=0.$

(3) Let us define $\cE '$ and $\cF '$ as follows:
$$\cE '=\biguplus_{i=k+1}^{n-1}\cS _i(V),{\ }{\ }{\ }\cF '=\biguplus_{i=1}^{k-1}\cS _i(V).$$ Note that the set of vertices in $\cG (V)$ is the disjoint union $\cE '\uplus \cF ' \uplus \cS _k(V).$ Arguing as in the proof of the previous part, we see that the vertices in $\cE '\uplus \cF '$ can be colored by $|\cE '|$ colors, and $|\cE '|$ is the smallest possible number of colors to be used to color the vertices in $\cE '\uplus \cF ',$ and the vertices in $\cE '$ are colored by mutually different colors. It follows from part (2) of \ref{7.4} that any arbitrarily chosen vertex in $\cS _k(V)$ is adjacent to every vertex in $\cE '.$ To color the vertices in $\cS _k(V)$ we must use colors that are not already used to color the vertices in $\cE '\uplus \cF '.$ Hence, to finish the proof we should show that the smallest number of colors to color the vertices in $\cS _k(V)$ is $\frac{1}{2}|\cS _k(V)|.$

From part (2) of \ref{6.7}, there are disjoint sets $\cA $ and $\cB $ and a bijection $\alpha :\cA \rightarrow \cB $ such that $\cS _k(V)=\cA \uplus \cB $ and $X\cap \alpha (X)=0$ for any $X\in \cA .$ Let $|\cA |=m$ for some natural number $m,$ so $\cS _k(V)=2m.$

We now want to show that to color the vertices in $\cS _k(V)$ we need at least $m$ colors. Indeed, suppose for a moment that, it is possible to color all the $2m$ vertices in $\cS _k(V)$ by using less than $m$ colors. Then there must be at least three vertices with the same color. However, this is not possible by the virtue of \ref{7.5}.

We finally want to show that we can color the vertices in $\cS _k(V)$ by using $m$ colors. Indeed, we may write $\cS _k(V)$ as a disjoint union
$$\cS _k(V)=\biguplus _{X\in \cA}\{X,\alpha (X)\}.$$ Let $\{\cU _X:X\in \cA\}$ be a set of $m$ mutually distinct colors indexed by the elements of $\cA .$ It is now clear that we can color the vertices in $\cS _k(V)$ by $m$ colors, by insisting that, for each $X\in \cA ,$ the colors of the two vertices in $\{X, \alpha (X)\}$ are the same which is $\cU _X.$
\end{proof}
The previous result does not cover the case in which all of the numbers $n,n_1,n_2,...,n_r$ are even. We do not know what the chromatic number is in this case. The cardinalities $\cS _i(V)$ appeared in the previous result can be calculated by using \ref{6.4}.

\section{Planarity}
A graph is called planar if it can be drawn on the plane in such a way that its edges intersect only at their endpoints. See, for instance, \cite{West} or \cite{Ore}. The planarity of intersection graphs of ideals of a commutative ring is studied in \cite{Jaf3}. We devote this section to the determination of the structure of the modules whose intersection graphs are planar. We have no restrictions on the module $V$ and the ring $R$ so that some of our results generalize the known results in the special cases.

For any natural number $n$ we denote by $\K_ n$ the complete graph on $n$ vertices, which is not planar for $n\geq 5,$ see \cite{West}. If a graph $\cG $ has a subgraph isomorphic to $\K _n,$ then we say that $\cG $ contains $\K _5.$ Thus, $\cG $ does not contain $\K _n$ if and only if there are no mutually adjacent $n$ vertices in $\cG .$

\begin{rem}
\label{8.1} If $\cG (V)$ does not contain $\K _n,$ then $V$ has a composition series of length less than or equal to $n.$
\end{rem}
\begin{proof} This is obvious, because the subgraph of $\cG (V)$ induced by the proper submodules of $V$ appearing in any ascending or descending chain of submodules of $V$ is a complete graph.
\end{proof}

\begin{lem}
\label{8.2} Let $V$ be an $R$-module having a composition series of length $m$ where $m\geq 3$ is a natural number. Suppose that $\cG (V)$ does not contain $\K _n.$ Then:
\begin{description}
 \item[{\rm (1)}] If $V$ is not a semisimple $R$-module, then the number of maximal $R$-submodules of $V$ is at most $n-m+1.$
 \item[{\rm (2)}] If $V$ is a semisimple $R$-module, then the number of maximal $R$-submodules of $V$ is at most $n-m+2.$
 \item[{\rm (3)}] If $2m>n+2,$ then $V$ is not a semisimple $R$-module.
\end{description}
\end{lem}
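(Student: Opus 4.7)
In all three parts the strategy is the same: exhibit a sufficiently large clique in $\cG(V)$ and then invoke the hypothesis that $\cG(V)$ does not contain $\K_n.$

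For (1), since $V$ has finite composition length, the assumption that $V$ is not semisimple is equivalent to $\Jac(V)\neq 0.$ I would pick a simple submodule $S\leq \Jac(V)$; by definition $S$ lies inside every maximal submodule of $V.$ Now extend $S$ to a composition series $S=V_1<V_2<\cdots<V_m=V.$ Each of $V_1,\ldots,V_{m-1}$ and each of the maximal submodules $M_1,\ldots,M_t$ contains $S,$ so they pairwise intersect and together form a clique. Since $V_{m-1}$ is maximal (hence equal to some $M_j$) while $V_1,\ldots,V_{m-2}$ have composition length less than $m-1$ and so cannot be maximal, the number of distinct vertices in this clique is $(m-1)+t-1=m+t-2.$ The hypothesis forces $m+t-2\leq n-1,$ i.e., $t\leq n-m+1.$

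For (2) the Jacobson radical trick is no longer available, so I would instead fix one maximal submodule $M_1$ and a composition series $0<U_1<U_2<\cdots<U_{m-1}=M_1$ of $M_1.$ The candidate clique is $\{U_2,\ldots,U_{m-1},M_1,\ldots,M_t\},$ which has $(m-2)+t-1=m+t-3$ distinct elements (the single coincidence being $U_{m-1}=M_1$). Any two distinct maximal submodules $M_i,M_j$ satisfy $M_i+M_j=V,$ so the modular length identity gives composition length$(M_i\cap M_j)=m-2\geq 1$ (using $m\geq 3$). Similarly, for $i\geq 2$ and $j\neq 1,$ working inside the modular lattice of submodules of $M_1$ and using $U_i\leq M_1$ yields
\[
\text{length}(U_i\cap M_j)\;=\;\text{length}(U_i\cap (M_1\cap M_j))\;\geq\; i+(m-2)-(m-1)\;=\;i-1\;\geq\;1,
\]
so the claimed set really is a clique. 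The bound $m+t-3\leq n-1$ then gives $t\leq n-m+2.$

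For (3), rather than constructing a new clique I would simply combine (2) with the explicit count of maximal submodules of a semisimple module from \ref{5.2}. If $V\cong n_1T_1\oplus\cdots\oplus n_rT_r$ is semisimple and $d_i=|\End_R(T_i)|,$ then
\[
t\;=\;\sum_{i=1}^{r}\frac{d_i^{n_i}-1}{d_i-1}\;=\;\sum_{i=1}^{r}\bigl(1+d_i+\cdots+d_i^{n_i-1}\bigr)\;\geq\;\sum_{i=1}^{r}n_i\;=\;m,
\]
since each $d_i\geq 2$ by Schur's lemma. Combined with (2) this gives $m\leq t\leq n-m+2,$ that is $2m\leq n+2.$ The contrapositive is precisely (3).

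The only mildly subtle point is the modular length identity used in (2); it holds because the lattice of submodules of the finite-length module $M_1$ is modular, so the composition length is a modular rank function on it. Everything else is straightforward bookkeeping, and in particular the same modular computation handles the boundary case $m=3$ (where the clique in (2) degenerates to just $\{M_1,\ldots,M_t\}$).
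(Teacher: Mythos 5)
Your proposal is correct and follows essentially the same route as the paper: in each part one assembles a clique from the maximal submodules together with (most of) a composition series chain, counts its size, and compares with the no-$\K_n$ hypothesis, with (3) obtained from (2) plus the lower bound $\fm(V)\geq m$ for semisimple $V$ (the paper explicitly offers the count from \ref{5.2} as one way to get this bound, just as you do). The only cosmetic difference is that in (1) you anchor the clique on a simple submodule of $\Jac(V)$ rather than on $\Jac(V)$ itself, which changes nothing in the count.
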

\begin{proof} (1) As $V$ is not semisimple and has a (finite) composition series, the Jacobson radical $\Jac (V)$ of $V$ is not zero. Suppose for a moment that there are at least $n-m+2$ maximal submodules. Let $\cM $ be the set of all maximal submodules of $V.$ Let $\cN $ be the set of all proper submodules of $V$ appearing in a composition series of $V$ which contains $\Jac (V)$ as one of its term. Therefore, $\cN $ is a chain with respect to the subset or equal to relation, $|\cN |=m-1,$ $\Jac (V)\in \cN ,$ and $|\cM \cap \cN |=1.$ Since $\Jac (V)$ is the intersection of all the maximal submodules of $V,$ the intersection of any two elements of the set $\cM \cup \cN $ is nonzero. This implies that $\cG (V)$ contains $\K _n,$ because $|\cM \cap \cN |=1$ and so $$|\cM \cup \cN |\geq (n-m+2)+(m-1)-1=n.$$

(2) We argue as in the previous part. Let $\cM $ be the set of all maximal submodules of $V,$ and $\cN $ be the set of all proper submodules of $V$ appearing in a composition series $\cC$ of $V.$ Suppose that the conclusion is false. Then, $V$ has at least $n-m+3$ maximal submodules. We cannot deduce that the intersection of any two elements of the set $\cM \cup \cN $ is nonzero, because $\Jac (V)=0$ here. However, as $m>2,$ given any maximal submodule of $V$ and any nonsimple submodule of $V,$ the sum of their composition lengths is greater than $m,$ so that their intersection is nonzero. Therefore, we deduce that the intersection of any two elements of the set $\cM \cup (\cN -\{W\})$ is nonzero, where $W$ is the (unique) simple module appearing in the chain $\cC .$ As $|\cM \cap (\cN -\{W\})|=1,$ we see that $|\cM \cup (\cN -\{W\})|\geq n,$ implying that $\cG (V)$ contains $\K _n.$

(3) Suppose that $V$ is semisimple. As $V$ has a composition series of length $m,$ the semisimplicity of $V$ implies that $V$ has at least $m$ maximal submodules. (If it is not obvious, see \ref{5.2}, or write $V$ as a direct sum of $m$ simple submodules, say $S_1,S_2,...,S_m,$ and then letting, $M_i$ be the sum of all simple modules $S_1,S_2,...,S_m$ except $S_i,$ we see that $M_1,M_2,...,M_m$ are mutually distinct $m$ maximal submodules). By part (2) we should have $m\leq n-m+2.$
\end{proof}

In the rest of this section, we will frequently mention about the length of a composition series and the number of maximal submodules of a module. To simplify the reading we develop some notations. Let $W$ be an $R$-module having a (finite) composition series. We sometimes use the notations $\ell (W)$ and $\fm(W)$ to denote the composition length of $W$ and the number of maximal $R$-submodules of $W,$ respectively. Moreover, for any nonnegative integer $n$ and any $R$-module $U,$ by writing $\ell (U)=n,$ we claim that $U$ has a composition series of length $n.$

We may restate \ref{3.2} as follows.

\begin{pro}
\label{8.3} $\cG (V)$ does not contain $\K _3$ if and only if $\ell (V)\leq 2,$ or $\ell (V)=3$ and $\fm (V)=1.$
\end{pro}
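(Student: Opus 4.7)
The plan is to observe that this statement is merely a reformulation of Proposition~\ref{3.2} using the new shorthand $\ell(V)$ and $\fm(V)$. Indeed, since $\cG(V)$ is a simple graph, the subgraph on any three mutually adjacent vertices is exactly a $\K_3$, which is itself a $3$-cycle; conversely any $3$-cycle in $\cG(V)$ is an induced $\K_3$. Hence the condition ``$\cG(V)$ does not contain $\K_3$'' is identical to condition (ii) of Proposition~\ref{3.2}, namely ``$\cG(V)$ has no $3$-cycles''.

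Given this, the proof reduces to invoking the equivalence (ii)$\iff$(iii) of Proposition~\ref{3.2}: $\cG(V)$ has no $3$-cycles if and only if either $V$ admits a composition series of length at most $2$, or $V$ admits a composition series of length $3$ and has a unique maximal $R$-submodule. Rewriting this with the notations $\ell(V)$ and $\fm(V)$ introduced just before the proposition yields exactly the two alternatives $\ell(V)\leq 2$, or $\ell(V)=3$ with $\fm(V)=1$.

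There is essentially no obstacle here: the entire content was already established in Section~3, and the only task in the write-up is to point out the graph-theoretic triviality that ``contains $\K_3$'' and ``has a $3$-cycle'' mean the same thing in a simple graph, and then to translate the conclusion of \ref{3.2} into the $\ell,\fm$ notation. I would therefore present the proof as a one-line reduction: ``Containing $\K_3$ is the same as having three mutually adjacent vertices, i.e.\ a $3$-cycle; the claim is now the equivalence of (ii) and (iii) in \ref{3.2}.''
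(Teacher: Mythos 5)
Your proposal is correct and matches the paper exactly: the paper gives no separate proof of \ref{8.3}, introducing it with the sentence ``We may restate \ref{3.2} as follows,'' which is precisely your reduction. The only content is the observation that containing $\K_3$ in a simple graph is the same as having a $3$-cycle, so the claim is the equivalence (ii)$\iff$(iii) of \ref{3.2} rewritten in the $\ell,\fm$ notation.
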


\begin{pro}
\label{8.4} $\cG (V)$ does not contain $\K _4$ if and only if one of the following conditions holds:
\begin{description}
 \item[{\rm (a)}] $\ell (V)\leq 2.$
 \item[{\rm (b)}] $\ell (V)=3,$ and $V$ is not a semisimple $R$-module, and $\fm (V)\leq 2.$
 \item[{\rm (c)}] $V$ is a semisimple $R$-module isomorphic to a direct sum of three mutually nonisomorphic simple $R$-modules. In particular, $\ell (V)=3.$
 \item[{\rm (d)}] $\ell (V)=4,$ and $V$ has a unique maximal $R$-submodule $J,$ and $\fm (J)=1.$
\end{description}
\end{pro}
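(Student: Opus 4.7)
The plan is to first use \ref{8.1} to restrict $\ell(V)\leq 4$, then treat the cases $\ell(V)=3$ and $\ell(V)=4$ separately using \ref{8.2} to bound $\fm(V)$, and finally check sufficiency of each of (a)--(d) by enumerating the vertex set of $\cG(V)$ and noting that two distinct simple submodules are never adjacent.

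For necessity: if $\cG(V)$ has no $\K_4$ then $\ell(V)\leq 4$ by \ref{8.1}; if $\ell(V)\leq 2$ we land in (a). Assume $\ell(V)=3$. If $V$ is non-semisimple, \ref{8.2}(1) with $m=3,\,n=4$ gives $\fm(V)\leq 2$, which is case (b). If $V$ is semisimple, \ref{8.2}(2) gives $\fm(V)\leq 3$, so by \ref{5.2} and the bound $|\End_R(S)|\geq 2$, the only semisimple length-$3$ module meeting this bound is $T_1\oplus T_2\oplus T_3$ with the $T_i$ pairwise nonisomorphic; for $V\cong 3S$ one has $\fm(V)=d^2+d+1\geq 7$, and for $V\cong 2S\oplus T$ with $S\not\cong T$ one has $\fm(V)=d+2\geq 4$, so in both of these the maximals already force a $\K_4$ (any two distinct maximals of a length-$3$ module intersect nontrivially by a composition-length count). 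This yields case (c). Finally, if $\ell(V)=4$ then $2m=8>6=n+2$, so \ref{8.2}(3) forces $V$ non-semisimple, and then \ref{8.2}(1) gives $\fm(V)\leq 1$, hence $\fm(V)=1$; call the unique maximal $J$. To force $\fm(J)=1$, suppose towards contradiction that $J$ has two distinct maximal submodules $J_1,J_2$. Since $\ell(J)=3$ and $\ell(J_i)=2$, the equality $J=J_1\oplus J_2$ would give $\ell(J)=4$, so $J_1\cap J_2\neq 0$. Then $\{J,J_1,J_2,J_1\cap J_2\}$ are four distinct proper submodules of $V$ any two of which meet nontrivially, so they span a $\K_4$ in $\cG(V)$, contradicting our assumption. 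Hence $\fm(J)=1$, giving (d).

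For sufficiency, in each case I would write out the proper submodule set explicitly and observe that two distinct simple submodules meet only in $0$, so cannot both sit in a $\K_4$. In (a) there are no edges by \ref{2.5}. In (b), every proper submodule is simple or one of the (at most two) maximals, and the only simple lying inside both maximals is $M_1\cap M_2$, so at most three vertices can be pairwise adjacent. In (c), the proper submodules are exactly the three simple summands $S_i$ and the three length-$2$ modules $S_i\oplus S_j$; a direct check shows $S_i$ is not adjacent to $S_j\oplus S_k$ whenever $i\notin\{j,k\}$, so no four vertices are pairwise adjacent. In (d), every proper submodule of $V$ lies in $J$ and every proper submodule of $J$ lies in $J_1$, so the vertex set of $\cG(V)$ is $\{J,J_1\}\cup\{\text{simple submodules of }J_1\}$; since simples are pairwise non-adjacent, no $\K_4$ arises.

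The main obstacle is the case $\ell(V)=3$ with $V$ semisimple, where one must rule out the structures $3S$ and $2S\oplus T$ ($S\not\cong T$) and verify that only $S_1\oplus S_2\oplus S_3$ with pairwise nonisomorphic summands survives. This requires combining the counting in \ref{5.2} with the observation that, in a length-$3$ module, distinct maximal submodules automatically have nontrivial intersection (by a length argument), so that already $\fm(V)\geq 4$ forces a $\K_4$; the delicate point is handling $V=2S\oplus T$ with $d=2$, where $\fm(V)=4$ and all four maximals turn out to pairwise intersect, giving the required $\K_4$.
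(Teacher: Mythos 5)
Your proposal is correct and follows essentially the same route as the paper: bound $\ell(V)\leq 4$ via \ref{8.1}, use \ref{8.2} (and \ref{5.2} for the semisimple length-$3$ case) to pin down the number of maximal submodules in each case, reduce the length-$4$ case to the unique maximal submodule $J$, and verify sufficiency of (a)--(d) by enumerating the proper submodules and noting that distinct simple submodules are never adjacent. The only cosmetic differences are that you prove $\fm(J)=1$ directly by exhibiting the clique $\{J,J_1,J_2,J_1\cap J_2\}$ where the paper cites \ref{8.3}, and the ``delicate point'' you flag for $2S\oplus T$ with $d=2$ is already settled by the composition-length count you state.
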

\begin{proof} Suppose that $\cG (V)$ does not contain $\K _4.$ We know from \ref{8.1} that $\ell (V)\leq 4.$ We now want to understand what happens at each possible value for $\ell (V).$

Assume that $\ell (V)=3:$

(b) follows from \ref{8.2}. Assume now that $V$ is semisimple, so that $V\cong S_1\oplus S_2\oplus S_3$ for some simple $R$-modules $S_i.$ Then \ref{8.2} implies that $\fm (V)\leq 3.$ If any two of $S_1,S_2,S_3$ are isomorphic, it follows from \ref{5.2} that $\fm (V)>3.$ Thus, (c) follows.

Assume that $\ell (V)=4:$

Any semisimple module of composition length $4$ has at least $4$ maximal submodules. (See \ref{5.2}, or the proof of part (3) of \ref{8.2}). Thus, \ref{8.2} implies that $V$ is not semisimple and $V$ has a unique maximal submodule, say $J.$ This shows that the proper submodules of $V$ are precisely $J$ and the proper submodules of $J.$ Hence, $\cG (J)$ must not contain $\K _3$ (because, otherwise the vertices of $\K _3$ in $\cG (J)$ and $J$ form $\K _4$ in $\cG (V)$). As $\ell (J)=3,$ it follows from \ref{8.3} that $\fm (J)=1.$ Thus, (d) follows.

Conversely, we want to observe that for any module $V,$ satisfying any of the conditions (a)-(d), the graph $\cG (V)$ does not contain $K_4.$

It is clear that if $V$ satisfies (a) then $\cG (V)$ has no edges.

Moreover, if $V$ satisfies (b) or (d), then it is obvious that $V$ has at most $2$ nonsimple proper submodules. As the intersection of any two distinct simple modules is zero, this shows that $\cG (V)$ does not contain $\K _4.$

Finally, suppose that $V$ satisfies (c). By using \ref{4.1} or \ref{6.4}, we see that $V$ has $3$ maximal submodules and $3$ simple submodules. So $\cG (V)$ does not contain $\K _4,$ because there are no edges between simple submodules and the intersection of all maximal submodules is zero.
\end{proof}

\begin{pro}
\label{8.5} $\cG (V)$ does not contain $\K _5$ if and only if one of the following conditions holds:
\begin{description}
 \item[{\rm (a)}] $\ell (V)\leq 2.$
 \item[{\rm (b)}] $\ell (V)=3,$ and $V$ is not a semisimple $R$-module, and $\fm (V)\leq 3.$
 \item[{\rm (c)}] $V$ is a semisimple $R$-module isomorphic to a direct sum of three mutually nonisomorphic simple $R$-modules. In particular, $\ell (V)=3.$
 \item[{\rm (d)}] $V$ is a semisimple $R$-module isomorphic to $S\oplus S\oplus T$ where $S$ and $T$ are nonisomorphic simple $R$-modules with $|\End _R(S)|=2.$ In particular, $\ell (V)=3.$
 \item[{\rm (e)}] $\ell (V)=4,$ and $V$ has a unique maximal $R$-submodule $J,$ and $\fm (J)\leq 2.$
 \item[{\rm (f)}] $\ell (V)=4,$ and $V$ has a unique maximal $R$-submodule $J,$ and $J$ is a semisimple $R$-module isomorphic to a direct sum of three mutually nonisomorphic simple $R$-modules.
 \item[{\rm (g)}] $\ell (V)=4,$ and $V$ has exactly two maximal $R$-submodule $J$ and $J',$ and $\fm (J)=1,$ and $\fm (J')=1.$
 \item[{\rm (h)}] $\ell (V)=5,$ and $V$ has a unique maximal $R$-submodule $J_1,$ and $J_1$ has a unique maximal $R$-submodule $J_2,$ and $J_2$ has a unique maximal $R$-submodule.
\end{description}
\end{pro}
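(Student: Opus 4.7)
The plan is to mimic the proof of Proposition \ref{8.4} by first invoking Lemma \ref{8.1} to bound $\ell (V)\leq 5,$ and then running a case analysis on $\ell (V).$ At each length, Lemma \ref{8.2} constrains $\fm (V)$; Propositions \ref{5.2} and \ref{6.4} enumerate the admissible semisimple structures; and Proposition \ref{8.4} is applied recursively to a maximal submodule $J$ of $V$ whenever $V$ has a unique maximal submodule, since any $\K _4$ inside $\cG (J)$ combined with $J$ would produce a $\K _5$ in $\cG (V).$

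The case $\ell (V)\leq 2$ gives (a) immediately. For $\ell (V)=3,$ Lemma \ref{8.2}(1) yields $\fm (V)\leq 3$ when $V$ is non-semisimple, producing (b). When $V$ is semisimple of length $3,$ Lemma \ref{8.2}(2) gives $\fm (V)\leq 4$; writing $V\cong n_1T_1\oplus \cdots \oplus n_rT_r$ and substituting into Proposition \ref{5.2} leaves only three structural possibilities, namely a sum of three mutually nonisomorphic simples (yielding (c)); $V\cong 2S\oplus T$ with $S\not \cong T$ and $d:=|\End _R(S)|$ satisfying $d+2\leq 4,$ forcing $d=2$ and giving (d); or $V\cong 3S,$ excluded because $\fm (V)=d^2+d+1\geq 7.$ The converses for (c) and (d) are verified by listing submodules via Proposition \ref{6.4}: in (c) the three simples are pairwise disjoint and each meets only two of the three maximals, while in (d) the four maximals form a $\K _4$ but no simple submodule meets all four of them.

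For $\ell (V)=4,$ Lemma \ref{8.2}(3) forces $V$ to be non-semisimple and Lemma \ref{8.2}(1) forces $\fm (V)\leq 2.$ If $\fm (V)=1$ with unique maximal $J,$ applying Proposition \ref{8.4} to $J$ (only clauses (b) and (c) there are compatible with $\ell (J)=3$) produces (e) and (f). If $\fm (V)=2$ with maximals $J_1,J_2,$ set $K=J_1\cap J_2,$ which has composition length $2$ and is therefore a maximal submodule of each $J_i.$ The main technical step is to show that $\fm (J_1)\geq 2$ always forces a $\K _5$ in $\cG (V)$: if $\fm (J_1)=2$ with maximals $M,M',$ then $K\in \{M,M'\},$ so $M\cap M'\subseteq K\subseteq J_2,$ and $\{J_1,J_2,M,M',M\cap M'\}$ is a $\K _5$; if $\fm (J_1)\geq 3,$ pick three distinct maximals $M_1,M_2,M_3$ of $J_1$ (one of which is $K$), and $\{J_1,J_2,M_1,M_2,M_3\}$ is mutually adjacent because $J_2\cap M_i=K\cap M_i$ is either $K$ or a simple submodule. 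Symmetry in $J_2$ then yields $\fm (J_1)=\fm (J_2)=1,$ giving (g); the converse is immediate, as the proper submodules of $V$ reduce to $J_1,J_2,K,$ and the simple submodules of $K,$ among which distinct simples are never adjacent.

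For $\ell (V)=5,$ Lemma \ref{8.2} forces $V$ to be non-semisimple with $\fm (V)=1,$ so $V$ has a unique maximal $J_1$ with $\ell (J_1)=4,$ and Proposition \ref{8.4} applied to $J_1$ forces clause (d) of \ref{8.4}, the only one compatible with $\ell (J_1)=4,$ producing (h). The converse of (h) is immediate from the resulting chain structure: the proper submodules of $V$ are $J_1,J_2,J_3,$ and the simple submodules of $J_3,$ and pairwise disjointness of distinct simples prevents $\K _5.$ The principal obstacle throughout is the $\fm (V)=2$ subcase at length $4,$ where producing the $\K _5$ requires the explicit bookkeeping of how $K=J_1\cap J_2$ relates to the maximal submodules of $J_1.$
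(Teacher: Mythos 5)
Your proposal is correct and follows essentially the same route as the paper: bound $\ell (V)\leq 5$ via \ref{8.1}, constrain $\fm (V)$ at each length via \ref{8.2}, use \ref{5.2} to settle the semisimple length-$3$ cases, pass to the unique maximal submodule and apply \ref{8.4} recursively, and build an explicit $\K _5$ in the two-maximal length-$4$ subcase (your split on $\fm (J_1)=2$ versus $\fm (J_1)\geq 3$ is just a reindexing of the paper's split on whether $J\cap J'\in \{X,Y\}$). The only difference is that you spell out the converse verifications, which the paper leaves as an exercise in drawing the graphs.
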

\begin{proof} We argue as in the proof \ref{8.4}. Suppose that $\cG (V)$ does not contain $\K _5.$ Then, $\ell (V)\leq 5$ from \ref{8.1}. We now want to understand what happens at each possible value for $\ell (V).$

Assume that $\ell (V)=3:$

(b) is an easy consequence of \ref{8.2}.

Assume that $V$ is semisimple, so that $V\cong S_1\oplus S_2\oplus S_3$ for some simple $R$-modules $S_i.$ Now \ref{8.2} implies that $\fm (V)\leq 4.$ Let $d=|\End _R(S_1)|.$ By using \ref{5.2} we calculate $\fm (V)$ as:

$$\fm (V)=
\left\{
  \begin{array}{ll}
   3 , & \hbox{if $S_1,S_2,S_3$ are mutually nonisomorphic;} \\
   d+2 , & \hbox{if $S_1\cong S_2$ but $S_1\not \cong S_3$;} \\
   1+d+d^2 , & \hbox{if $S_1\cong S_2\cong S_3.$}
  \end{array}
\right.$$
Thus, (c) and (d) follow because the smallest possible value for $d$ is $2$ (being a division ring, if $\End _R(S_1)$ is finite then it must be a finite field).

Assume that $\ell (V)=4:$

From \ref{8.2} we conclude that $V$ is not semisimple and $\fm (V)\leq 2.$ So, $\fm (V)=1$ or $\fm (V)= 2.$ Suppose first that $\fm (V)=1.$ Then, $V$ has a unique maximal submodule $J,$ which must contain every proper submodule of $V.$ As $\cG (V)$ does not contain $\K _5,$ we see that $\cG (J)$ cannot contain $\K _4.$ Hence, (e) and (f) follows from \ref{8.4}. Note that the module $J$ in (e) is necessarily nonsemisimple.

Suppose now that $\fm (V)=2.$ Then, $V$ has exactly two maximal submodules, say $J$ and $J'$. We will show that $\fm (J)=1=\fm (J').$ Suppose for a moment that $J$ has two maximal submodules $X$ and $Y.$ Considering the composition lengths, if $J\cap J'\notin \{X,Y\}$ then the vertices in the set $$\{J,J',X,Y,J\cap J'\}$$ form $\K _5$ in $\cG (V).$ On the other hand, if $J\cap J'\in \{X,Y\}$ then the vertices in the set $$\{J,J',X,Y,X\cap Y\}$$ form $\K _5$ in $\cG (V).$ Hence, $\fm (J)=1$ (and similarly $\fm (J')=1$). Consequently, (g) follows.

Assume that $\ell (V)=5:$

It follows from \ref{8.2} that $V$ has a unique maximal submodule $J_1.$ As every proper submodule of $V$ is contained in $J_1,$ the graph $\cG (J_1)$ cannot contain $\K _4.$ Then, as $\ell (V)=4,$ (h) follows from \ref{8.4}.

Conversely, we need to observe that $\cG (V)$ does not contain $\K _5$ for any $V$ satisfying any of the conditions (a)-(h). However, this is easy to see because the conditions (a)-(h) give enough information to draw the graphs $\cG (V)$ in each cases. More than this, we actually see that $\cG (V)$ are planar in each of the conditions (a)-(h)
\end{proof}

If $\cG (V)$ is planar, then $\cG (V)$ does not contain $\K _5,$ and so $V$ satisfies one of the conditions in the above result. On the other hand, in each of the conditions (a)-(h) above, it is easy to draw $\cG (V)$ and observe that $\cG (V)$ is planar. Thus we have the following result.

\begin{cor}
\label{8.6} $\cG (V)$ is planar if and only if $V$ satisfies one of the conditions (a)-(h) given in \ref{8.5}.
\end{cor}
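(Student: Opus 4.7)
My plan is to prove the two directions separately.

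For the forward implication, I would invoke the fact that $\K _5$ is nonplanar, so any planar graph cannot contain $\K _5$ as a subgraph. Hence if $\cG (V)$ is planar, Proposition \ref{8.5} immediately forces $V$ to satisfy one of (a)--(h). This direction is essentially free.

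The reverse implication is the substantive one: for each case (a)--(h) I would enumerate the proper submodules and the edges of $\cG (V)$ explicitly, then exhibit a planar drawing. I would organize the cases in three groups. First, the easy cases: in (a) the graph has no edges by Remark \ref{2.5}; in (b) there are at most three maximal submodules (all pairwise intersecting via $\Jac (V)\neq 0$) together with a bounded number of simple submodules, yielding a small explicitly drawable graph; and in (h) every proper submodule lies on the unique chain $V>J_1>J_2>\dots $, so $\cG (V)$ is built from a clique on the non-simple proper submodules with simple submodules attached as leaves to $J_2$, which is clearly planar. Second, the two semisimple length-$3$ cases (c) and (d): here I would apply Proposition \ref{6.4} (and in (d), the assumption $|\End _R(S)|=2$) to count the simple and maximal submodules, then use Lemma \ref{4.1} to describe their pairwise intersections, and finally check planarity by inspection.

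Third, the length-$4$ cases (e)--(g), where I would exploit the structure already obtained in the proof of Proposition \ref{8.5}: in (e) and (f), $V$ has a unique maximal $J$, so every proper submodule of $V$ other than $J$ lies in $J$, and $\cG (V)$ is obtained from $\cG (J)$ by adding the vertex $J$ adjacent to all others---since $\cG (J)$ falls under case (a), (b), or (c) of Proposition \ref{8.4}, we already have planar drawings of $\cG (J)$, and the extra vertex $J$ can be placed in the outer face; in (g), the two maximals $J,J'$ each have a unique maximal submodule, so the proper submodules are $J,J',J\cap J'$ and the simple submodules inside $J$ and inside $J'$, and the edges are easy to list.

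The main obstacle I expect is case (d): with $V\cong S\oplus S\oplus T$ and $|\End _R(S)|=2$, Proposition \ref{6.4} gives four simple submodules and four maximal submodules of $V$, and I must carefully determine which pairs of maximal submodules intersect nontrivially (the three ``diagonal'' maximals of $S\oplus S$-type will share the common summand structure while the fourth, $S\oplus S$ itself, meets all three nontrivially, and the $T$-summand interacts through the Goursat parameters). Producing a concrete planar embedding there is the only step that is not essentially immediate; once the intersection pattern is tabulated via Lemma \ref{4.1}, a direct drawing suffices.
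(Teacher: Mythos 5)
Your overall strategy is the same as the paper's: the forward direction is immediate from the nonplanarity of $\K_5$ together with \ref{8.5}, and for the converse the paper simply asserts that in each of (a)--(h) the graph can be drawn and seen to be planar. Your proposal is more detailed than the paper's one-line justification, and your treatment of (a), (b), (c), (d), (e), (f) works out (in (b) the number of simple submodules need not be bounded as you claim, but they attach to at most two non-simple vertices, so planarity survives; and (d), which you flag as the hard case, is just $\K_5$ minus an edge with some degree-$2$ and degree-$3$ vertices added into faces).

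However, there is a genuine gap in cases (g) and (h), and it is a gap in the paper as well. In case (g) every simple submodule of $V$ lies in $J\cap J'$ and is therefore adjacent to \emph{all three} of $J$, $J'$, $J\cap J'$; in case (h) every simple submodule lies in $J_3$ and is adjacent to all of $J_1,J_2,J_3$ --- so your description of the simples as ``leaves attached to $J_2$'' is incorrect. In both cases $\cG(V)$ is the join of a triangle with an independent set of size $k$, where $k$ is the number of simple submodules of the relevant length-$2$ module ($J\cap J'$ resp.\ $J_3$). Nothing in conditions (g) or (h) prevents that length-$2$ module from being isomorphic to $S\oplus S$, in which case $k=|\End_R(S)|+1\geq 3$ and the graph contains $\K_{3,3}$, hence is nonplanar even though it contains no $\K_5$. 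This is realizable: for the quiver with sources $1,2$, sink $3$, and two arrows from each source to the sink, the representation $(k,k,k^2)$ in which the two arrows out of each source give the two coordinate embeddings $k\to k^2$ satisfies (g) with $J\cap J'\cong S_3\oplus S_3$. So the converse direction of \ref{8.6} is false as stated; to repair it one must add to (g) and (h) the requirement that $J\cap J'$ (resp.\ $J_3$) have at most two simple submodules, i.e.\ not be of the form $S\oplus S$. Your plan, which defers these two cases as ``easy to list,'' would fail exactly here; the underlying cause is that excluding $\K_5$ (which is all \ref{8.5} controls) does not exclude $\K_{3,3}$.
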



\begin{thebibliography}{EMG}
\bibitem[1]{Ak}
S. Akbaria, H. A. Tavallaeeb, S. Khalashi Ghezelahmad, {\it Intersection graph of submodules of a module},  J. Algebra Appl., {\bf 10}, (2011), 1-8.
\bibitem[2]{Bro}
A. E. Brouwer, A. M. Cohen, A. Neumaier, {\it Distance-regular graphs}, Springer-Verlag, Berlin, 1989.
\bibitem[3]{Bas}
G. Bacs\'{o}, H. A. Jung, Z. Tuza, {\it Infinite versus finite graph domination}, Discrete Math., {\bf 310}, (2010), 1495-1500.
\bibitem[4]{Chak}
I. Chakrabarty, S. Ghosh, T. K. Mukherjee, M. K. Sen, {\it Intersection graphs of ideals of rings}, Discrete Math., {\bf 309}, (2009), 5381-5392.
\bibitem[5]{Cla}
W. E. Clark, {\it Matching subspaces with complements in finite vector spaces}, Bull. Inst. Combin. Appl., {\bf 6}, (1992), 33-38.
\bibitem[6]{Dil}
R. P. Dilworth, {\it Proof of a conjecture on finite modular lattices}, Ann. of Math. (2) {\bf 60}, (1954). 359-364.
\bibitem[7]{Fla}
D. L. Flannery, {\it Submodule lattices of direct sums}, Comm. Algebra, {\bf 22}, no. 10, (1994), 4067-4087.
\bibitem[8]{Jaf}
S. H. Jafari, N. Jafari Rad, {\it Domination in the intersection graphs of rings and modules}, Ital. J. Pure Appl. Math. {\bf 28}, (2010), 17-20.
\bibitem[9]{Jaf2}
S. H. Jafari, N. Jafari Rad, {\it Results on the intersection graphs of subspaces of a vector space}, arXiv:1105.0803v1
\bibitem[10]{Jaf3}
S. H. Jafari, N. Jafari Rad, {\it Planarity of intersection graphs of ideals of rings}, Int. Electron. J. Algebra, {\bf 8}, (2010), 161-166.
\bibitem[11]{Khu}
S. M. Khuri, {\it Correspondence theorems for modules and their endomorphism rings}, J. Algebra, {\bf 122}, no. 2, (1989),  380-396.
\bibitem[12]{Lai}
J. D. Laison, Y. Qing, {\it Subspace intersection graphs}, Discrete Math., {\bf 310}, (2010), 3413-3416.
\bibitem[13]{Lam}
T. Y. Lam, {\it Lectures on modules and rings}, Graduate Texts in Mathematics, 189, Springer-Verlag, New York, 1999.
\bibitem[14]{Warf}
K. R. Goodearl, R. B. Warfield, {\it An introduction to noncommutative Noetherian rings}, Second edition, Cambridge University Press, 2004.
\bibitem[15]{Ore}
O. Ore, {\it Theory of graphs}, American Mathematical Society Colloquium Publications, Vol. XXXVIII, 1962
\bibitem[16]{Shen}
R. Shen, {\it Intersection graphs of subgroups of finite groups}, Czechoslovak Math. J., {\bf 60}, (2010), 945-950.
\bibitem[17]{West}
D. West, {\it Introduction to graph theory}, Prentice Hall, Inc., 1996
\bibitem[18]{Zel}
B. Zelinka, {\it Intersection graphs of finite abelian groups}, Czechoslovak Math. J., {\bf 25}, (1975), 171-174.
\end{thebibliography}
\end{document}